\colorlet{triforcefilloutercolor}{Black}
\colorlet{triforcefillinnercolor}{Black}
\colorlet{triforceoutlineinnercolor}{Black}
\colorlet{triforceoutlineoutercolor}{Black}
\tikzset{%
	triforcefillshade/.style={%
		inner color=triforcefillinnercolor,%
		outer color=triforcefilloutercolor%
	},%
	triforceoutlineshade/.style={%
		inner color=triforceoutlineinnercolor,%
		outer color=triforceoutlineoutercolor%
	}%
}
\newcommand{\triforce}[1]{%
	\begin{tikzpicture}%
		\newdimen\triforcewidth%
		\newdimen\triforceheight%
		\triforcewidth=#1%
		\pgfmathparse{sqrt(3)}%
		\pgfmathsetlength{\triforceheight}{\pgfmathresult / 2 * \triforcewidth}%
		\foreach \x / \y in {0 / 0, 0.5\triforcewidth / 0, 0.25\triforcewidth / 0.5\triforceheight}%
		{%
			\shade[triforcefillshade, xshift=\x, yshift=\y]%
			(0, 0)  -- +(.5\triforcewidth, 0) -- +(60:.5\triforcewidth) -- cycle;%
			\shade[triforceoutlineshade, xshift=\x, yshift=\y]%
			(0, 0)  -- +(.5\triforcewidth, 0) -- +(60:.5\triforcewidth) -- cycle%
			(30:.0175\triforcewidth) -- ($(60:.5\triforcewidth) + (-90:.0175\triforcewidth)$) -- ($(0.5\triforcewidth, 0) + (150:.0175\triforcewidth)$) -- cycle;%
		}%
	\end{tikzpicture}%
}
\newcommand{\Val}{\mathrm{val}}
\theoremstyle{definition}
\newtheorem{definition}{Definition}[section]
\newtheorem{question}[definition]{Question}
\newtheorem{remark}[definition]{Remark}
\newtheorem*{definition*}{Definition}
\newtheorem*{example*}{Example}
\newtheorem*{exercise*}{Exercise}
\newtheorem*{notation*}{Notation}
\newtheorem*{problem*}{Problem}
\newtheorem*{remark*}{Remark}
\theoremstyle{plain}
\newtheorem{conjecture}[definition]{Conjecture}
\newtheorem{corollary}[definition]{Corollary}
\newtheorem{proposition}[definition]{Proposition}
\newtheorem{theorem}[definition]{Theorem}
\newtheorem*{claim*}{Claim}
\newtheorem*{conjecture*}{Conjecture}
\newtheorem*{corollary*}{Corollary}
\newtheorem*{lemma*}{Lemma}
\newtheorem*{proposition*}{Proposition}
\newtheorem*{theorem*}{Theorem}
\newcommand\Z{{\mathbb{Z}}}
\newcommand\N{{\mathbb{N}}}
\newcommand\E{{\mathbb{E}}}
\title{On generalized corners and matrix multiplication}
\author{Kevin Pratt}
\begin{document}
\maketitle

\begin{abstract}
Suppose that $S \subseteq [n]^2$ contains no three points of the form $(x,y), (x,y+\delta), (x+\delta,y')$, where $\delta \neq 0$. How big can $S$ be? Trivially, $n \le |S| \le n^2$. Slight improvements on these bounds are obtained from  Shkredov's upper bound for the corners problem \cite{shkredov2006generalization}, which shows that $|S| \le O(n^2/(\log \log n)^c)$ for some small $c > 0$, and a construction due to Petrov \cite{petrovanswer}, which shows that $|S| \ge \Omega(n \log n/\sqrt{\log \log n})$.

Could it be that for all $\varepsilon > 0$, $|S| \le O(n^{1+\varepsilon})$? We show that if so, this would rule out obtaining $\omega = 2$ using a large family of abelian groups in the group--theoretic framework of \cite{cu2003,cohn2005group} (which is known to capture the best bounds on $\omega$ to date), for which no barriers are currently known. Furthermore, an upper bound of $O(n^{4/3 - \varepsilon})$ for any fixed $\varepsilon > 0$ would rule out a conjectured approach to obtain $\omega = 2$ of \cite{cohn2005group}. Along the way, we encounter several problems that have much stronger constraints and that would already have these implications.
\end{abstract}

\section{Introduction}

The exponent of matrix multiplication $\omega$ is the smallest number such that for any $\varepsilon > 0$, there exists an algorithm for multiplying $n \times n$ matrices using $O(n^{\omega + \varepsilon})$ arithmetic operations. Since Strassen's initial discovery that $\omega < 3$ \cite{strassen1969gaussian}, there has been much work on understanding this fundamental constant, with the end goal being the determination of whether or not $\omega = 2$. It is currently known that $2 \le \omega <2.3716$ \cite{williams2023new}.

The best upper bounds on $\omega$ obtained since 1987 \cite{strassen1987relative} can be understood as solutions to the following hypergraph packing problem. Let $M_n$ be the \emph{matrix multiplication hypergraph}, the tripartite 3-uniform hypergraph with parts $X_1 = X_2 = X_3 = [n]^2$, and where $((i,j), (k,l),(m,n)) \in X_1 \times X_2 \times X_3$ is a hyperedge if and only if $j=k, l=m, n=i$. Given an abelian group $G$, let $X_G$ be its ``addition hypergraph" with vertex sets $G \sqcup G \sqcup G$, and where $(a_1,a_2,a_3) \in G \times G \times G$ is a hyperedge exactly when $a_1+a_2+a_3=0$. Suppose that $X_G$ contains $k$ disjoint induced copies of $M_n$. Then
\begin{equation}\label{eqn:stppbd}
\omega < \log_n(|G|/k).
\end{equation}
Phrased in terms of the group--theoretic approach proposed by Cohn and Umans \cite{cu2003} and further developed by Cohn, Kleinberg, Szegedy, and Umans \cite{cohn2005group}, this is equivalent to proving upper bounds on $\omega$ via \emph{simultaneous triple product property} (STPP) constructions in abelian groups. The above inequality was established in \cite[Theorem 5.5]{cohn2005group}. It can be also be deduced via the \emph{asymptotic sum inequality} of \cite{schonhage1981partial}.

From this perspective, the best bounds on $\omega$ to date are obtained by taking $G$ to be a large power of a cyclic group --- specifically, $\Z_7^\ell$ with $\ell \to \infty$. However, in \cite{blasiak2017cap} ideas related to the resolution of the cap-set problem in additive combinatorics \cite{ellenberg2017large} were used to show that one cannot obtain $\omega = 2$ using groups of \emph{bounded exponent} --- such as $\Z_7^\ell$ --- via this approach. This obstruction is due to the fact that when $G$ has bounded exponent, there is power-savings over the trivial upper bound on the size of the largest induced matching in $X_G$ (also called a \emph{3-matching} \cite{sawin2018bounds}, or a \emph{tricolored sum-free set} \cite{blasiak2017cap}). For example, when $G = \mathbb{Z}_7^\ell$ the largest induced matching has size at most $O(6.16^\ell)$. On the other hand, $M_n$ contains an induced matching of size $n^{2-o(1)}$: if we identify vertices in $M_n$ with edges in the complete tripartite graph $K_{n,n,n}$, an induced matching in $M_n$ corresponds to a tripartite graph on at most $3n$ vertices where every edge is contained in a unique triangle, and the number of vertices in the induced matching equals the number of edges in this graph. A well-known construction in extremal combinatorics yields such a graph with $n^{2-o(1)}$ edges (see  \cite[Corollary 2.5.2]{zhao2023graph})\footnote{This is the Rusza-Szemer\'edi problem. The equivalence between induced matchings in $M_n$ and this problem was independently noted in \cite{alman2023matrix}.} and hence $M_n$ contains an induced matching of size $n^{2-o(1)}$. Modulo minor details, the claimed barrier then follows, as an efficient packing of copies of $M_n$ into $X_G$ would imply the existence of a large induced matching in $X_G$, a contradiction.\footnote{The techniques involved in the resolution of the cap--set problem (in particular, slice rank) actually give stronger ``tensor analogues" of this barrier; see \cite{christandl2018barriers, alman2021limits}.}

This is the only obstruction to obtaining $\omega = 2$ via the use of \Cref{eqn:stppbd} that we are aware of. Unfortunately,\footnote{Or fortunately, for the optimist.} this barrier says nothing about the viability of general abelian groups, as their addition hypergraphs may contain large induced matchings. For example, if $A$ is a 3-term arithmetic progression free (hereon abbreviated to 3AP-free) subset of $G$, then the subsets $A,A,-2A$ of the vertex sets of $X_G$ induce a matching of size $|A|$. Hence this barrier cannot apply to any group containing a 3AP-free subset of size $|G|^{1-o(1)}$, such as $\mathbb{Z}_n$ \cite{behrend1946sets}. Could one achieve $\omega = 2$ using cyclic groups, or perhaps products of cyclic groups of growing orders?

In this paper we identify problems in additive combinatorics whose answer we conjecture would rule out obtaining $\omega = 2$ using a large family of abelian groups for which the induced matching barrier is irrelevant. This family includes abelian groups with a bounded number of direct factors --- the ``opposite" condition of that of having bounded exponent. These problems have not been studied before as far as we are aware. Aside from their connections to fast matrix multiplication, we find them intrinsically interesting. We now discuss the simplest-to-state such problem.

\subsection{A skew corners problem}
The \emph{corners problem} in additive combinatorics asks for the size of the largest subset of $[n]^2$ containing no three points of the form
\[(x, y), (x,y+\delta),(x+\delta,y)\]
where $\delta \neq 0$. Ajtai and Szemer\'edi \cite{ajtai1974sets} settled this problem up to factors of $n^{o(1)}$ by proving an upper bound of $o(n^2)$ and a lower bound of $n^{2-o(1)}$. This problem is significant as it was the first multidimensional case of Szemer\'edi's theorem to be established, and for its application to the number-on-forehead model in communication complexity \cite{chandra1983multi}.

 Here is a subtle strengthening of the condition of the corners problem for which we know essentially nothing:

\begin{question}\label{quest:skewtris}
What is the size of the largest $S \subseteq [n]^2$ which does not contain three points of the form
\[(x,y), (x, y+\delta), (x + \delta, y')\] with $\delta \neq 0$?
\end{question}
That is, not only must $S$ avoid all corners, but given any two points in $S$ lying on the same vertical line, the \emph{entire vertical line} passing through the third point that would form a corner with these two points must be absent from $S$! Naturally, we call such a set of points \emph{skew corner-free}. See \Cref{fig:skew10} for an example of such a set.

\begin{figure}
	\begin{center}
	\includegraphics[scale=0.5]{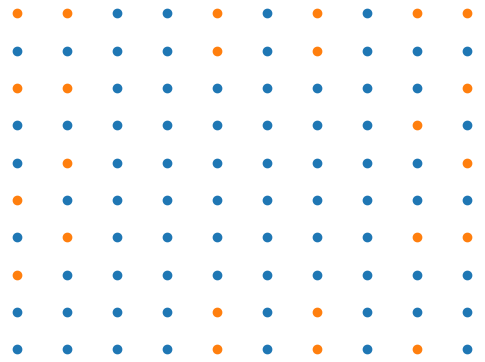}
		\caption{The orange points form a skew corner-free subset of $[10] \times [10]$ of size $24$. This is largest possible.}\label{fig:skew10}
	\end{center}
\end{figure}

Note that there is a trivial lower bound of $n$, obtained by taking $S$ to be all points lying on a vertical or horizontal line. We conjecture that this is almost optimal:

\begin{conjecture}\label{conj:skew}
Fix any $\varepsilon > 0$. If $S$ is skew corner-free, then $|S| \le O(n^{1+\varepsilon})$.
\end{conjecture}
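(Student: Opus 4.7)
The plan is to attack the conjecture by exploiting the Sidon-like structure imposed by the skew condition, combining Fourier-analytic bounds with a density-increment iteration. First I would set up notation: let $T \subseteq [n]$ denote the set of non-empty columns and $A_x \subseteq [n]$ the column at $x \in T$. The skew condition rewrites as $x' - x \notin A_x - A_x$ for every $x < x'$ in $T$; equivalently $A_x$ avoids all differences in $F_x = (T-x) \cap [1, n-1]$, a set of size $|T \cap (x,n]|$. A dyadic pigeonhole on column sizes reduces the problem, at a cost of a $\log n$ factor, to the case $|A_x| \in [K, 2K]$ for all $x \in T$, and the target becomes $K \cdot |T| \leq n^{1+\varepsilon}$.

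In the Fourier-uniform regime, if $\max_{\xi \neq 0} |\hat\chi_T(\xi)| \leq \alpha |T|$, then expanding the identity $\sum_{d \in F_x} |A_x \cap (A_x + d)| = 0$ in Fourier gives $|A_x| = O(\alpha n)$ for every $x \in T$. Under the generic uniformity $\alpha = O(\sqrt{\log n / |T|})$ attained by a random set of size $|T|$, this yields $|S| \leq O(n \sqrt{|T| \log n}) \leq O(n^{3/2} \sqrt{\log n})$: a nontrivial bound, but far from the conjecture. In the opposite regime, $T$ correlates with a long arithmetic progression or a Bohr set $P$; restricting the $x$-coordinate to $P$ produces a skew corner-free set in a lower-complexity structure on which to iterate. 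The skew condition is inherited, so the recursion is well-defined.

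The main obstacle, and the reason this conjecture appears genuinely hard, is closing the gap from $n^{3/2}$ to $n^{1+\varepsilon}$. Standard density-increment schemes in additive combinatorics lose a polynomial factor in $n$ per step, so iteration alone cannot reach the conjectured near-linear bound; indeed, even the Behrend-type constructions that saturate analogous problems yield only $n^{1-o(1)}$ density savings per iteration, enough for a single $o(n^2)$ improvement but nowhere near $n^{1+\varepsilon}$. A successful proof will presumably require either a one-shot combinatorial argument specifically tailored to the skew configuration, or a custom increment that simultaneously controls the joint distribution of $T$ and of the columns $\{A_x\}_{x \in T}$, rather than extracting structure from them one at a time. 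Without such a new ingredient, the approach outlined here seems to plateau around $n^{3/2}$, and bridging the remaining gap is where I expect essentially all the difficulty to lie.
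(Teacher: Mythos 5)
This statement is \Cref{conj:skew}: it is an open conjecture, not a theorem. The paper offers no proof of it, and to the best of current knowledge no proof exists; the paper only records the best known bounds, namely the Petrov-type lower bound $|S|\geq\Omega(n\log n/\sqrt{\log\log n})$ and the Shkredov-derived upper bound $|S|\leq O\bigl(n^2/(\log\log n)^{c}\bigr)$ for a small $c>0$. So there is nothing in the paper to compare your argument against, and you should not expect to produce a complete proof. Your own closing paragraph, which concedes that the sketch ``plateau[s] around $n^{3/2}$'' and that the real difficulty lies in bridging the remaining gap, is exactly the right assessment; I would just be explicit up front that you are proposing a direction of attack, not claiming a proof.

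Two technical remarks on the partial argument itself. First, the translation of the skew condition drops half the constraints: if $(x,y),(x,y+\delta)\in S$ then the column $x+\delta$ must be empty for \emph{every} nonzero $\delta$, negative as well as positive, so $A_x$ must avoid all differences $|x'-x|$ with $x'\in T\setminus\{x\}$, not just those with $x'>x$. Taking $F_x=(T-x)\cap[1,n-1]$ as you do makes $|F_x|$ collapse to $0$ for $x$ near the top of $T$, which breaks the estimate $|A_x|\leq N\max_{\xi\neq 0}|\hat\chi_{F_x}(\xi)|/|F_x|$ precisely where you need it; using the symmetric difference set restores $|F_x|\approx|T|-1$ uniformly. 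Second, and more seriously, $F_x$ is a \emph{truncation} of $T-x$ to $[1,n-1]$ (or to $[-(n-1),n-1]\setminus\{0\}$ after the fix), and truncation does not preserve Fourier uniformity: $\hat\chi_{F_x}$ is $\hat\chi_{T}$ twisted and convolved against a Dirichlet kernel, which can inflate the nonzero coefficients by a $\log n$-type factor or worse. Relatedly, the density-increment step is not as clean as stated: passing the $x$-coordinate to a step-$b$ progression $P$ restricts the forbidden shifts $\delta=x'-x$ to multiples of $b$, so the inherited constraint on $A_x$ is ``avoid the $b$-dilated copy of $(T\cap P)-x$,'' which does not rescale to a copy of the original problem. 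This coupling of the $x$- and $y$-shifts through the same $\delta$ is the structural obstruction to running a standard corners-style increment here, and it is, I suspect, where the genuinely new idea would have to enter.
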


A construction due to Petrov \cite{petrovanswer} (\Cref{prop:nontrivgrid}) shows that one can have $|S| \ge \Omega(n \log n / \sqrt{\log \log n})$. On the other hand, the best upper bound we know is $O(n^2/(\log \log n)^{ 0.0137 \cdots})$, which follows immediately from Shkredov's upper bound on the corners problem \cite{shkredov2006generalization}.

Two of the main results of this paper are the following.

\begin{restatable}{theorem}{main1}
	\label{thm:main1}
If \Cref{conj:skew} is true, then one cannot obtain $\omega = 2$ via STPP constructions in the family of groups $\mathbb{Z}_q^\ell$, where $q$ is a prime power.
\end{restatable}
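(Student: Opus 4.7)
The plan is to extract from any STPP packing in $\mathbb{Z}_q^\ell$ a large skew corner-free subset of $G \times G$, transfer it into an integer grid $[N]^2$ via a digit-based embedding, and invoke \Cref{conj:skew} for a contradiction. Suppose for contradiction that $\omega = 2$ were obtainable via STPP in $\{\mathbb{Z}_q^\ell\}$. Then for every $\varepsilon' > 0$ there exists such a construction: $k$ vertex-disjoint induced copies of $M_n$ in $X_G$, labelled by injections $\alpha_r, \beta_r, \gamma_r : [n]^2 \to G$ with $\alpha_r(i,j) + \beta_r(j,l) + \gamma_r(l,i) = 0$ and the simultaneous triple property forbidding extraneous zero-sums across copies, satisfying $|G|/k \le n^{2+\varepsilon'}$. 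I define
\[
W = \{(\alpha_r(i,j), \beta_r(j,l)) : r \in [k],\ i,j,l \in [n]\} \subseteq G \times G.
\]
Disjointness of the copies together with injectivity of $\alpha_r, \beta_r$ let one recover $(r,i,j,l)$ from the pair, giving $|W| = kn^3$.

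The central combinatorial step is to verify $W$ is skew corner-free. Suppose $(x,y), (x,y+\delta), (x+\delta, y') \in W$ with $\delta \ne 0$. The shared first coordinate forces the first two points into the same copy $r_1$ with identical $\alpha$-label: $(x,y) = (\alpha_{r_1}(i,j), \beta_{r_1}(j, l_1))$ and $(x, y+\delta) = (\alpha_{r_1}(i,j), \beta_{r_1}(j, l_2))$ with $l_1 \ne l_2$ and $\delta = \beta_{r_1}(j, l_2) - \beta_{r_1}(j, l_1)$. The hyperedge relations in copy $r_1$ then give $\gamma_{r_1}(l_1, i) = -x-y$ and $\gamma_{r_1}(l_2, i) = -x-y-\delta$. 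Letting the third point be $(\alpha_{r_3}(i_3, j_3), \beta_{r_3}(j_3, l_3))$, I compute
\[
\alpha_{r_3}(i_3, j_3) + \beta_{r_1}(j, l_1) + \gamma_{r_1}(l_2, i) = (x+\delta) + y + (-x-y-\delta) = 0.
\]
This zero-sum spans $A_{r_3}, B_{r_1}, C_{r_1}$, so STPP forces $r_3 = r_1$ and the triple must be a hyperedge in copy $r_1$. Matching the $M_n$ template $(\alpha_{r_1}(I,J), \beta_{r_1}(J,L), \gamma_{r_1}(L,I))$, the $\beta$-index pins $L = l_1$ while the $\gamma$-index pins $L = l_2$, forcing $l_1 = l_2$, a contradiction.

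To transfer $W$ to integers I use the digit embedding $\phi : \mathbb{Z}_q^\ell \hookrightarrow \mathbb{Z}$ with $\phi(a_1, \dots, a_\ell) = \sum_i a_i (2q)^{i-1}$ and $a_i \in \{0, \dots, q-1\}$; its image lies in $[N]$ for $N = (2q)^\ell$. Crucially, $\phi(c) - \phi(a) = \phi(d) - \phi(b)$ in $\mathbb{Z}$ forces $c - a = d - b$ in $G$: digit differences lie in $\{-(q-1), \dots, q-1\}$, so the signed base-$2q$ expansion is unique. Since the skew corner condition is expressible purely through two difference equations, any skew corner in $(\phi \times \phi)(W) \subseteq [N]^2$ lifts to one in $W \subseteq G^2$, so $(\phi \times \phi)(W)$ is skew corner-free. \Cref{conj:skew} then yields $k n^3 = |W| \le C_\varepsilon N^{1+\varepsilon}$ for every $\varepsilon > 0$.

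Substituting $N = 2^\ell |G|$ and $|W| \ge |G| \cdot n^{1-\varepsilon'}$ and taking logarithms pins $n \le |G|^{\log_q 2 + o(1)}$. The hard part will be converting this upper bound on $n$ into an explicit contradiction with $\omega = 2$ across the full range of $(q, \ell)$. For bounded $q$ the bound is vacuous, but the induced matching barrier of \cite{blasiak2017cap} already rules out $\omega = 2$ there; for growing $q$, one combines the conjecture's constraint with the single-copy TPP bound $n \le |G|^{1/6}$ and the disjointness bound $k \le |G|/n^2$ to close off each remaining parameter regime. This final bookkeeping, ensuring that no admissible $(n, k, q, \ell)$ attains $\omega = 2$ while respecting both the conjecture and the STPP structure, is the technical core of the argument.
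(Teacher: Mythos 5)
Your construction of the set $W = \{(\alpha_r(i,j), \beta_r(j,l))\} \subseteq G \times G$, and the verification that $W$ is skew corner-free using the STPP condition, is correct and is a nice, direct repackaging of what the paper does more circuitously (going through the quantity $\Val(G)$ via \Cref{prop:stppval,prop:tridel,prop:skewdel}). The digit embedding $\phi$ into $[N]^2$ with $N = (2q)^\ell$, preserving difference equations, is also sound.

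However, the final step is not ``bookkeeping'' — it contains a genuine gap, and the sketch you give for closing it would not work. The inequality you derive is roughly $n \lesssim 2^\ell \cdot q^{\ell\varepsilon}$, and you need this to contradict something. But the packing bound $k n^2 \le |G|$ together with the $\omega < 2 + \varepsilon'$ requirement $|G|/k < n^{2+\varepsilon'}$ are jointly consistent with \emph{any} value of $n \ge 2$ (just take $k$ near $|G|/n^2$); they give no lower bound on $n$. In particular, nothing you invoke forces $n > 2^\ell$, and when $n \le 2^\ell$ your inequality $kn^3 \le q^\ell n \le (2q)^\ell = N$ is already implied by the trivial bound, so \Cref{conj:skew} is vacuous. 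The ``single-copy TPP bound $n \le |G|^{1/6}$'' you cite (which is not correct as stated — the TPP packing bound for equal-size sets gives $n \le |G|^{1/2}$) is an \emph{upper} bound on $n$, pointing in the wrong direction.

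The missing ingredient is the tight arithmetic removal lemma in $\Z_q^\ell$ (\cite{fox2017tight}, appearing in the paper as \Cref{cor:removaltight}). It gives a power-savings $k n^2 \le (q/C)^\ell$ for an absolute constant $C > 1$. Combined with $|G|/k < n^{2+\varepsilon'}$, this forces $n > C^{\ell/\varepsilon'}$, which for $\varepsilon'$ small enough dominates $2^{\ell(1+\varepsilon)/(1-\varepsilon')} q^{\ell\varepsilon/(1-\varepsilon')}$ (after choosing $\varepsilon$ small in terms of the construction's $q$ and letting $\ell \to \infty$), yielding the contradiction. This is precisely the role played in the paper's proof of \Cref{thm:polyval} by the combination of \Cref{cor:removaltight} and H\"older's inequality. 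Without the removal lemma, your argument cannot rule out a hypothetical near-perfect packing of small $M_n$'s, and the conjecture is never triggered.
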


Furthermore, a weakening of \Cref{conj:skew} would rule out obtaining $\omega = 2$ using a specific type of STPP construction in arbitrary abelian groups. In \cite{cohn2005group}, it was conjectured that this type of construction can be used to obtain $\omega = 2$.

\begin{restatable}{theorem}{main2}
	\label{thm:main2}
If the largest skew corner-free subset of $[n]^2$ has size $O(n^{4/3 - \varepsilon})$ for some $\varepsilon > 0$, then \cite[Conjecture 4.7]{cohn2005group} is false.
\end{restatable}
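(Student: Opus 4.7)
The plan is to prove the contrapositive: assuming \cite[Conjecture 4.7]{cohn2005group} holds, I would extract from the conjectured construction a family of skew corner-free subsets of $[n]^2$ of size $n^{4/3 + \Omega(1)}$, contradicting the hypothesized upper bound of $O(n^{4/3-\varepsilon})$.

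First I would recall the precise statement of \cite[Conjecture 4.7]{cohn2005group}, which posits a particular style of STPP construction---three subsets of an abelian group $G$ satisfying the triple product property together with a simultaneous family of shifts---with parameters calibrated to drive $\omega \to 2$ via \Cref{eqn:stppbd}. The relevant quantitative feature is the density of the triple product subsets, which is of order $|G|^{2/3-o(1)}$; the target exponent $4/3 = 2 \cdot \tfrac{2}{3}$ for skew corners is the image of this density under a planar projection.

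Second, I would perform the translation from the group-theoretic setting to $[n]^2$. Given the conjectured triples $(A, B, C) \subseteq G^3$ together with the simultaneous shift family, I would fix a pair of coordinates---for instance, a Freiman-isomorphic image of $A \times B$ inside an integer grid, or the image of a generic linear projection $G \to \Z^2$---so that the STPP condition rules out the skew corner configuration $(x,y), (x, y+\delta), (x+\delta, y')$ directly. Algebraically, the existence of such a configuration in the projection should lift to an identity in $G$ incompatible with the disjointness of the copies of $M_n$ packed into $X_G$.

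The main obstacle is choosing the projection so that it is simultaneously dense enough to yield the $n^{4/3}$ lower bound and rigid enough to force the \emph{skew} condition rather than just the weaker corner-free condition. A naive projection would only rule out full corners, which the introduction explains is consistent with a much larger $S$. Extracting the skew condition should require exploiting the fact that the STPP involves \emph{three} subsets tied together by common shift parameters, so that the mere presence of one extra point $(x+\delta, y')$ already produces a forbidden relation in $G$. Quantifying this carefully, and then averaging over the simultaneous shifts to isolate a single dense skew corner-free slice, should complete the argument.
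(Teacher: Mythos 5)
Your high-level strategy (proving the contrapositive) matches the paper, but the proposal misses the central machinery and leaves precisely the hard part as an unresolved ``main obstacle.'' Let me point out the specific gaps.

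First, a factual point: \cite[Conjecture 4.7]{cohn2005group} (\Cref{conj:twofamilies}) is about the \emph{simultaneous double product property}, i.e.\ pairs $(A_i,B_i)$, not a triple-plus-shift-family as you describe. The paper's \Cref{thm:twofam43} first converts an SDPP to an STPP inside $G^3$ by using a corner-free subset of $\Delta_n$ of size $n^{2-o(1)}$ to index the triples; you would need this step to even get started.

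Second, your claimed exponent is wrong in the wrong direction. Assuming the conjecture, the construction lands inside a group $G'$ of size $n^{6+o(1)}$ and witnesses $\Val(G')\ge n^{8-o(1)}$, i.e.\ $\Val(G')\ge |G'|^{4/3-o(1)}$. So the skew corner-free sets produced have size $N^{4/3-o(1)}$, not $N^{4/3+\Omega(1)}$. This is still enough to contradict an $O(N^{4/3-\varepsilon})$ upper bound, but as stated your bound is unsupported and overreaching.

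Third and most importantly, your step from a group-theoretic construction to a skew corner-free planar set is where the actual content lies, and you do not resolve it. You propose a ``Freiman-isomorphic image of $A\times B$'' or a ``generic linear projection,'' and yourself note the risk that such a projection would only give ordinary corner-freeness. That concern is fatal for the proposal as written: a generic projection has no reason to force the much stronger skew condition. The paper's route is to define the intermediate quantity $\Val(G)$ (via \emph{equilateral trapezoid-free} triples, \Cref{def:trap}), show that STPP constructions lower-bound $\Val(G)$ (\Cref{prop:stppval}), embed into cyclic groups via a digit map that preserves all sum relations, pass from $\Val(\Z_N)$ to $\Val(N)$ on the triangular grid (\Cref{prop:resasonable}), and only then relax through the chain trapezoid-free $\Rightarrow$ triforce-free $\Rightarrow$ skew corner-free in $\Delta_N$ $\Rightarrow$ skew corner-free in $[N]^2$ (\Cref{prop:tridel}, \Cref{prop:skewdel}). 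The skew condition comes out \emph{because} the original set arose from deleting whole lines in the triangular grid; it is not something one extracts by averaging over shifts. Without a concrete substitute for this chain, your argument does not close.
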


In fact, seemingly much weaker conjectures than \Cref{conj:skew} would already have these implications. The weakest conjecture we make is the following. Let $\Delta_n$ be a triangular array of $n(n+1)/2$ points. Suppose that we delete from $\Delta_n$ sets of points lying on lines parallel to the sides of this array, such that the remaining set of points does not contain any equilateral trapezoid with sides parallel to the sides of the array (see \Cref{fig:traps}). For example, we might delete all lines in one direction but one. Then, what is the maximum number of points that can remain? By our example, one can achieve at least $n$. We conjecture that this is essentially optimal (\Cref{conj:val}). Another condition we introduce, which is intermediate between this and being skew-corner free, is that of a skew corner-free subset of a triangular grid (see \Cref{fig:symm}).

\begin{figure}
	\begin{center}
	\rotatebox[origin=c]{180}{\includegraphics[scale=0.75]{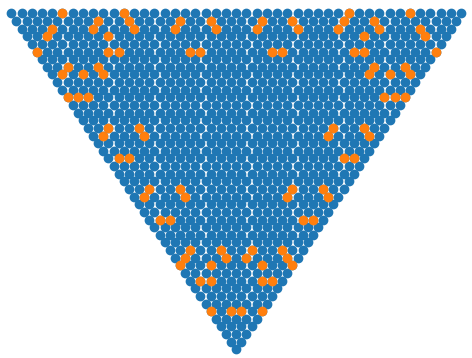}}
		\caption{The $90$ orange points form a skew corner-free subset of the triangular grid $\Delta_{45}$ (\Cref{def:skewcorners}): for any two orange points on the same line parallel to one of the sides of the grid, the line parallel to this side and passing through a third point that would form an equilateral triangle with these two points contains no orange points. This is largest-possible among subsets of $\Delta_{45}$ that are symmetric under the $S_3$ action on $\Delta_n$.}\label{fig:symm}
	\end{center}
\end{figure}

\subsection{Paper overview}
In \Cref{sec:bg} we review the group--theoretic approach of \cite{cu2003,cohn2005group}. In \Cref{sec:trirem} we record a very weak lower bound for this approach, which follows easily from the removal lemma in groups of \cite{serra2009combinatorial}. This lower bound becomes much stronger in $\mathbb{Z}_q^\ell$ (\Cref{cor:removaltight}), thanks to the improved bounds on the removal lemma of \cite{fox2017tight}, and we make later use of this fact.

In \Cref{sec:hyper} we note that the matrix multiplication hypergraph $M_n$ is an extremal solution to a certain forbidden hypergraph problem. This was our motivating observation. We define the ``value" of a group, $\Val(G)$, which captures this forbidden hypergraph problem in a group--theoretic context.  This quantity equals the maximum number of triangles in an induced subhypergraph of $X_G$ that does not contain the triforce hypergraph or a cycle of 4 triangles (see \Cref{fig:forbid}). This can also be expressed in terms of the group operation slightly awkwardly (\Cref{def:trap}). The trivial bounds are that $|G| \le \Val(G) \le |G|^{3/2}$; using the removal lemma of \cite{serra2009combinatorial}, the upper bound can be improved to $o(|G|^{3/2})$ (\Cref{prop:val2}). STPP constructions yield lower bounds on the quantity $\Val(G)$ (\Cref{prop:stppval}), so  ultimately it is upper bounds on $\Val(G)$ that we are interested in as a means towards barriers. The quantity $\Val(G)$ is super-multiplicative under direct product (\Cref{prop:tensorval}), which is one reason why power-improvements over the trivial bound seem to be easier to obtain in direct products of groups.

We then focus on the case of abelian groups in \Cref{sec:zn}. We show that a bound of $\omega = 2$ using the family of groups $\Z_q^\ell$ would imply that $\Val(\Z_n) \ge \Omega(n^{1+c})$ for some $c>0$ \Cref{thm:polyval}. We also show that a proof of $\omega = 2$ via \emph{simultaneous double product property} constructions \cite{cohn2005group} in any family of abelian groups would imply that $\Val(\Z_n) \ge \Omega(n^{4/3 - \varepsilon})$ for any given $\varepsilon>0$ (\Cref{thm:twofam43}). We thank Chris Umans for mentioning a related fact to us, which motivated this result. We then relate $\Val(\Z_n)$ to various questions about sets of points in the plane, including \Cref{quest:skewtris} (\Cref{def:trifree,def:skewcorners,def:biskew}). This gives \Cref{thm:main1,thm:main2}. We also give an example which shows that one cannot hope to prove strong upper bounds on $\Val(\Z_n)$ via a certain ``asymmetric" averaging argument (\Cref{prop:avgbad}).

The take-away of this paper is that STPP constructions yield subsets of $G \times G$ which satisfy dramatically stronger properties than that of being corner-free. While subsets satisfying these stronger properties do not imply STPP constructions in any obvious way, we believe that understanding them will be a stepping stone to understanding the power of the group--theoretic approach, and possibly towards improved upper bounds on $\omega$.
\section{Background}\label{sec:bg}
Bounds on $\omega$ from the group--theoretic approach are obtained by designing subsets of groups satisfying the following condition.
\begin{definition}
	A collection of triples of subsets $S_i, T_i, U_i$ of a group $G$ satisfy the simultaneous triple product property (or STPP for short) if
	\begin{enumerate}
		\item For each $i$, the sets $S_i, T_i, U_i$ satisfy the \emph{triple product property}: if $ss'^{-1}tt'^{-1}uu'^{-1}=I$ with $s, s' \in S_i, t,t' \in T_i, u,u' \in U_i$, then $s=s', t=t', u=u'$.
		\item Setting $S_i = A_iB_i^{-1}, T_j = B_jC_j^{-1}, U_k = C_kA_k^{-1}$, 
		\[ s_it_ju_k = I \iff i=j=k\]
		for all $s_i \in S_i, t_j \in T_j, u_k \in U_k$.
	\end{enumerate}
\end{definition}
The crucial fact is the following:
\begin{theorem}\label{prop:stppbound}\cite[Theorem 5.5]{cohn2005group}
	If $S_i, T_i, U_i \subseteq G$ satisfy the STPP, then
	\[\sum_i (|S_i||T_i||U_i|)^{\omega/3} \le \sum d_i^\omega \]
	where $d_i$'s are the dimensions of the irreducible representations of $G$.
\end{theorem}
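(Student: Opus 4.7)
The plan is to realize $\bigoplus_i \langle |S_i|, |T_i|, |U_i| \rangle$ as a restriction of the structural tensor of the group algebra $\C[G]$, and then invoke the Wedderburn decomposition together with Schönhage's asymptotic sum inequality.

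First, for a single triple satisfying the triple product property, I would carry out the standard encoding of matrix multiplication inside $\C[G]$. Formal matrices $X \in \C^{S \times T}$ and $Y \in \C^{T \times U}$ are lifted to group-algebra elements $P = \sum_{s,t} X_{s,t}\, s^{-1}t$ and $Q = \sum_{t,u} Y_{t,u}\, t^{-1}u$, so that the coefficient of $s_0^{-1}u_0$ in the product $PQ$ equals the matrix entry $(XY)_{s_0, u_0}$. The only input needed is the TPP: a would-be collision $s^{-1} t t'^{-1} u = s_0^{-1} u_0$ rearranges to $s_0 s^{-1}\, t t'^{-1}\, u u_0^{-1} = I$, and the TPP relation forces $s_0 = s$, $t = t'$, $u = u_0$. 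Hence $\langle |S|, |T|, |U| \rangle$ is a restriction of the structural tensor of $\C[G]$.

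Next, I would extend to multiple triples by setting $P = \sum_i P^{(i)}$ and $Q = \sum_i Q^{(i)}$ and checking that the cross terms $P^{(i)} Q^{(j)}$ with $(i,j) \ne (k,k)$ do not contribute to the coefficient of any output location $s_k^{-1} u_k$. This is precisely what the second STPP condition is designed to enforce: after rewriting via the factorization $S_i = A_i B_i^{-1}$, $T_i = B_i C_i^{-1}$, $U_i = C_i A_i^{-1}$, a cross-collision reduces to a product of the form $s_i t_j u_k = I$ with $(i,j,k)$ not all equal, which that condition forbids. This yields the desired embedding: $\bigoplus_i \langle |S_i|, |T_i|, |U_i| \rangle$ is a restriction of the structural tensor of $\C[G]$. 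By Wedderburn's theorem, $\C[G] \cong \bigoplus_\rho M_{d_\rho}(\C)$, so this structural tensor is isomorphic to $\bigoplus_\rho \langle d_\rho, d_\rho, d_\rho \rangle$. Schönhage's asymptotic sum inequality, applied to a direct sum of matrix multiplication tensors that restricts into another such direct sum, then yields
\[\sum_i (|S_i| |T_i| |U_i|)^{\omega/3} \le \sum_\rho d_\rho^\omega.\]

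The main obstacle I anticipate is the bookkeeping in the cross-term analysis: translating each would-be collision in $\C[G]$ into the specific product identity $s_i t_j u_k = I$ forbidden by the second STPP condition requires tracking the factorization through $A_i, B_i, C_i$ carefully, and arranging indices so that the non-diagonal case in the group algebra matches non-diagonal indexing in the STPP condition. The remaining ingredients — Wedderburn's decomposition and the asymptotic sum inequality — can be invoked as standard black boxes.
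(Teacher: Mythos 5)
The paper does not give its own proof of this theorem; it is cited directly from Cohn--Kleinberg--Szegedy--Umans, and your argument correctly reconstructs theirs: lift matrices via the TPP encoding into $\C[G]$, use the second STPP condition to annihilate cross-terms, decompose $\C[G]$ by Wedderburn, and invoke Sch\"onhage's asymptotic sum inequality. One small note on bookkeeping: after regrouping, a cross-collision has the form $(s_k s^{-1})(t t'^{-1})(u u_k^{-1}) = I$ with $s \in S_i$, $s_k \in S_k$, $t \in T_i$, $t' \in T_j$, $u \in U_j$, $u_k \in U_k$, and this is already \emph{verbatim} the condition forbidden by the CKSU form of the STPP on the sets $S_\ell, T_\ell, U_\ell$ themselves --- you do not need to pass through the auxiliary factorization $S_i = A_i B_i^{-1}$, and in fact $s_k s^{-1}$ need not lie in any single quotient set, so the rewrite you describe would not go through as stated.
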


The conditions of the STPP imply that the sets involved satisfy a simple ``packing bound" (see the discussion preceding \cite[Definition 2.3]{blasiak2017cap}).

\begin{proposition}\label{prop:pack}
If $S_i, T_i, U_i$ satisfy the STPP in a group $G$, then $\sum_i |S_i||T_i| \le |G|$, $\sum_i |T_i||U_i| \le |G|$, and $\sum_i |U_i||S_i| \le |G|$.
\end{proposition}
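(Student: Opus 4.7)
We prove the first bound $\sum_i |S_i||T_i| \le |G|$; the other two follow by the same argument applied after a cyclic relabeling of the three sets. The plan is to show that the multiplication map
\[
\Phi \colon \bigsqcup_i S_i \times T_i \;\longrightarrow\; G, \qquad (s,t)\mapsto st,
\]
is injective, which then yields the inequality by comparing cardinalities of the two sides.

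The injectivity argument has two parts. For the within-index part, assume $st = s't'$ with $s, s' \in S_i$ and $t, t' \in T_i$. Multiplying through by $uu^{-1}$ for any chosen $u \in U_i$ and rearranging gives an identity of the form $(ss'^{-1})(tt'^{-1})(uu^{-1}) = I$, which is precisely the shape excluded by the triple product property of $(S_i, T_i, U_i)$ (condition 1 of the STPP) unless $s = s'$ and $t = t'$. In the abelian setting --- which is the focus of this paper --- the rearrangement simplifies to $(s - s') + (t - t') + 0 = 0$ and the argument is completely transparent.

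For the cross-index part, assume $st = s't'$ with $(s,t) \in S_i \times T_i$ and $(s',t') \in S_j \times T_j$ where $i \ne j$. Using the STPP factorizations $S_j = A_j B_j^{-1}$ and $T_j = B_j C_j^{-1}$, write $s' = a' (b')^{-1}$ and $t' = b'(c')^{-1}$ with a common $b' \in B_j$, and set $u = c'(a')^{-1} \in U_j = C_j A_j^{-1}$. Then $s't'u = I$, and hence $stu = I$ as well. This exhibits a vanishing product $s_i\,t_i\,u_j = I$ with $i \ne j$, directly contradicting condition 2 of the STPP.

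The main obstacle I anticipate lies in the cross-index step, specifically in arranging a common $b' \in B_j$ in the factorizations of $s'$ and $t'$: the STPP axioms as stated do not immediately promise that arbitrary elements of $S_j$ and $T_j$ admit representatives with matching $B_j$-component. A clean proof must therefore supplement the above outline either with a short compatibility lemma about STPP factorizations, or by translating the packing inequality into the group-algebra language of \cite{cohn2005group}, where it emerges as a dimension count on the Wedderburn summands of $\mathbb{C}[G]$ participating in the STPP construction.
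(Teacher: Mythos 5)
The paper does not prove \Cref{prop:pack} itself; it cites the discussion preceding \cite[Definition 2.3]{blasiak2017cap}, where the injection used is $\bigsqcup_i S_i \times T_i \to G$, $(s,t) \mapsto s^{-1}t$. Your map $(s,t) \mapsto st$ does not obviously work outside the abelian case, and the within-index step contains a concrete rearrangement error: from $st = s't'$ one obtains $s'^{-1}s \cdot tt'^{-1} = I$, \emph{not} $ss'^{-1}tt'^{-1} = I$. The former has a left quotient on the $S$-side where the triple product property expects a right quotient, and in a nonabelian $G$ these differ, so the TPP as stated does not apply. With the map $(s,t) \mapsto s^{-1}t$ there is no such problem: $s^{-1}t = s'^{-1}t'$ rearranges, after inserting $uu^{-1}$ for any $u \in U_i$, to $s's^{-1}\,tt'^{-1}\,uu^{-1} = I$, which is exactly the TPP shape and forces $s=s'$, $t=t'$.

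The cross-index step fails for exactly the reason you flagged --- an arbitrary $s' \in A_jB_j^{-1}$ and $t' \in B_jC_j^{-1}$ need not admit factorizations through a common element of $B_j$, so there is in general no $u \in U_j$ with $s't'u = I$ --- and the fix is to drop the factorizations entirely. Again working with $(s,t) \mapsto s^{-1}t$: if $s^{-1}t = s'^{-1}t'$ with $(s,t) \in S_i \times T_i$ and $(s',t') \in S_j \times T_j$, $i \neq j$, then for any $u \in U_j$ one has $s' s^{-1}\, t\, t'^{-1}\, u\, u^{-1} = I$. Read against the standard form of the simultaneous condition in \cite[Definition 5.1]{cohn2005group}, which quantifies over products $s_a s_b'^{-1}\, t_b t_c'^{-1}\, u_c u_a'^{-1}$ (with $s_a \in S_a$, $s_b' \in S_b$, and so on) and concludes $a=b=c$, this is such a product with $(a,b,c)=(j,i,j)$, giving $i=j$, a contradiction. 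No compatibility lemma is required, and the group-algebra route you mention in closing, while viable, is far heavier than necessary.
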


A particular type of STPP construction can be obtained from pairs of sets satisfying a condition termed the \emph{simultaneous double product property} in \cite{cohn2005group}.
\begin{definition}
	We say that sets $(A_i, B_i)_{i=1}^n$ satisfy the simultaneous double product property (or SDPP for short) if 
	\begin{enumerate}
		\item For all $i$, $aa'^{-1} = bb'^{-1}$ only has the solution $a=a', b=b'$ for $a,a' \in A_i, b,b' \in B_i$,
		\item $a_i (a_j')^{-1}b_j(b_k')^{-1} = 1$ implies $i=k$, where $a_i \in A_i, a_j' \in A_j, b_j \in B_j, b_k' \in B_k$.
	\end{enumerate}
\end{definition}

In \cite{cohn2005group} it was conjectured that one can achieve $\omega = 2$ using SDPP constructions in abelian groups. This amounts to the following.
\begin{conjecture}\cite[Conjecture 4.7]{cohn2005group}\label{conj:twofamilies}
	For arbitrarily large $n$, there exists an abelian group $G$ of order $n^{2-o(1)}$ and $n$ pairs of sets $A_i, B_i$ where $|A_i||B_i| > n^{2-o(1)}$ satisfy the SDPP.
\end{conjecture}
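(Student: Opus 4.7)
The plan is to construct, for arbitrarily large $n$, an abelian group $G$ of order $n^{2-o(1)}$ together with $n$ pairs $(A_i, B_i)$ of subsets with $|A_i|\,|B_i| \ge n^{2-o(1)}$ satisfying the SDPP. Rewriting the conditions additively, condition~1 asks that for each $i$ the sums $a + b$ with $a \in A_i$, $b \in B_i$ are all distinct, while condition~2 translates to the requirement that $(A_i - A_j) \cap (B_k - B_j) = \emptyset$ whenever $i \ne k$. Specializing to $k = j$ and to $i = j$ in turn shows that the families $\{A_i\}$ and $\{B_i\}$ must each be pairwise disjoint, so $|A_i|, |B_i| \lesssim |G|/n \approx n$ and the construction must be essentially extremal.

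I would start by working in $G = \mathbb{F}_p \times \mathbb{F}_p$ with $p \approx n$ and trying $A_i = S_i \times \{i\}$, $B_i = \{i\} \times T_i$ for subsets $S_i, T_i \subseteq \mathbb{F}_p$ of size close to $p$. Condition~1 is automatic from the direct-product structure, and the pairwise disjointness of the $A_i$ and the $B_i$ is also automatic. The cross-pair condition reduces to the following combinatorial statement on the $S_i, T_i$: for every $j$ and every pair $i \ne k$, the two coincidences $s_j - s_i = j - k$ and $t_j - t_k = j - i$ cannot both occur. A natural first attempt is to take $S_i = T_i$ to be an affine image of a large Behrend-type 3AP-free set in $\mathbb{F}_p$ and then to prune an $o(1)$-fraction of points via an alteration argument that kills the remaining bad configurations while preserving total density; a second attempt is to replace the horizontal/vertical-line ansatz by fibres of a low-degree algebraic map $G \to \mathbb{F}_p$, which gives more freedom to decouple the two coincidences above.

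The main obstacle is the tightness of the construction against the barriers developed in this paper: \Cref{thm:main2} shows that a successful SDPP construction at the conjectured rate forces the largest skew corner-free subset of $[n]^2$ to have size $\Omega(n^{4/3})$, dramatically beyond Petrov's current lower bound of $\Omega(n \log n / \sqrt{\log \log n})$. So the heart of the proof must either produce a fundamentally new construction of skew corner-free sets --- perhaps via higher-dimensional algebraic varieties generalizing Behrend's sphere trick --- or circumvent the skew corners problem by producing the SDPP pairs non-explicitly, for instance by a probabilistic existence argument on a highly structured base set where the expected number of bad configurations can be made $o(n)$ and then deleted. This gap is exactly why the conjecture has remained open since \cite{cohn2005group}, and is where I expect the attempt to stall.
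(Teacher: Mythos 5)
There is no proof here to check, and none exists in the paper either: \Cref{conj:twofamilies} is an open conjecture quoted verbatim from \cite[Conjecture 4.7]{cohn2005group}, not a theorem this paper establishes. The paper's own contribution runs in the opposite direction --- \Cref{thm:main2} (via \Cref{thm:twofam43}) shows that sufficiently strong upper bounds on skew corner-free sets would \emph{refute} the conjecture --- so the correct response to this statement is to cite it as a conjecture, which is all the paper does. Your proposal acknowledges in its final paragraph that it ``stalls,'' so it should not be presented as a proof attempt at all; it is a research plan whose central difficulty is exactly the open problem.

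Beyond that framing issue, the concrete construction you sketch breaks down before the point where you defer to an alteration argument. With $A_i = S_i \times \{i\}$, $B_i = \{i\} \times T_i$ in $\F_p \times \F_p$, condition 2 for the triple $(i,j,k)$, $i \neq k$, forbids having both $k - j \in S_i - S_j$ and $i - j \in T_k - T_j$. But the conjecture forces $|S_i|,|T_i| \ge p^{1-o(1)}$, and for sets of density this high (in particular anything above $p/2$) the difference sets $S_i - S_j$ and $T_k - T_j$ are all of $\F_p$ or nearly so, so essentially \emph{every} triple $(i,j,k)$ is bad; the number of violations is of order $n^3$, not something an $o(1)$-fraction deletion of points can repair, and no estimate is offered for why Behrend-type structure would change this. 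Note also that condition 1 in additive notation requires the \emph{differences} $a - b$ ($a \in A_i$, $b \in B_i$) to be distinct, not the sums; this is harmless for your product ansatz but matters if you perturb it. Finally, as you yourself observe, any successful construction would, through \Cref{thm:twofam43} and \Cref{prop:skewdel,prop:tridel}, yield skew corner-free subsets of $[n]^2$ of size $n^{4/3-o(1)}$, far beyond the $\Omega(n\log n/\sqrt{\log\log n})$ of \Cref{prop:nontrivgrid}; absent such a breakthrough, the proposal cannot close.
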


 If $G$ is a finite group, we let $X_G$ denote the tripartite 3-uniform hypergraph with vertex parts $X_1 = X_2 = X_3 = G$, and where $(g_1, g_2, g_3)$ is a hyperedge (a \emph{triangle}) whenever $g_1g_2g_3=I$. In the event that $G$ is nonabelian, it is important that we fix some ordering on the parts of $X_G$ here. Recall that a 3-uniform hypergraph is said to be \emph{linear} if any two vertices are contained in at most one hyperedge. For example, $X_G$ is linear. The matrix multiplication hypergraph $M_{p,q,r}$ is defined to be the supporting hypergraph of the matrix multiplication tensor; i.e.~it is the hypergraph with parts $[p]\times [q], [q] \times [r], [r] \times [p]$, and where $((i,j), (k,l), (m,n))$ is a hyperedge if and only if $j=k, l=m, n=i$. If $X$ is a hypergraph, we sometimes write $E(X)$ for the set of hyperedges of $X$.

It is convenient to view STPP constructions from a hypergraph perspective.

\begin{proposition}
There exist sets $S_i, T_i, U_i \subseteq G$, satisfying the STPP if and only if $X_G$ contains as an induced subhypergraph the disjoint union of $M_{|S_i|, |T_i|, |U_i|}$.
\end{proposition}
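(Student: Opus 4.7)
The plan is to exhibit an explicit correspondence between STPP configurations and induced copies of disjoint unions of $M_{|S_i|, |T_i|, |U_i|}$ inside $X_G$, translating each STPP condition into a statement about the presence or absence of triangles. For the forward direction, I would start from STPP data $(A_i, B_i, C_i)$ with $S_i = A_i B_i^{-1}$, $T_i = B_i C_i^{-1}$, $U_i = C_i A_i^{-1}$, and first use the TPP to check that the parametrizations $(a,b) \mapsto a b^{-1}$, $(b,c) \mapsto b c^{-1}$, $(c,a) \mapsto c a^{-1}$ are injective, so that $|S_i|, |T_i|, |U_i|$ match the part sizes of $M_{|A_i|, |B_i|, |C_i|}$. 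Each MM hyperedge $((a,b), (b,c), (c,a))$ then maps to $(ab^{-1}, bc^{-1}, ca^{-1})$, which is a triangle of $X_G$ since the product telescopes to $I$. For induced-ness: cross-block triangles are exactly ruled out by STPP condition~(2), while for a within-block triangle $(a_1 b_1^{-1}, b_2 c_2^{-1}, c_3 a_3^{-1})$, comparing it against a suitably chosen reference MM hyperedge reproduces the left-hand side of the TPP hypothesis, which together with injectivity of the parametrizations forces $a_1 = a_3$, $b_1 = b_2$, $c_2 = c_3$, i.e., the triangle is itself an MM hyperedge.

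For the backward direction, I would take an induced copy of $\bigsqcup_i M_{|S_i|, |T_i|, |U_i|}$ in $X_G$, given by embedding maps $\phi_j^{(i)}$ on the three parts of the $i$-th block. The key step is to recover, within each block $i$, a product structure $\phi_1^{(i)}(a,b) = \alpha_a \beta_b^{-1}$, $\phi_2^{(i)}(b,c) = \beta_b \gamma_c^{-1}$, $\phi_3^{(i)}(c,a) = \gamma_c \alpha_a^{-1}$ for suitable functions $\alpha, \beta, \gamma$ on the index sets. Fixing basepoints and chaining the triangle identities $\phi_1 \phi_2 \phi_3 = I$ across pairs of MM hyperedges sharing an index determines all three maps up to a global choice. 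Taking $A_i = \{\alpha_a\}$, $B_i = \{\beta_b\}$, $C_i = \{\gamma_c\}$, within-block induced-ness then becomes exactly the TPP and cross-block induced-ness becomes STPP condition~(2).

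The main technical subtlety will be in the backward direction: reconstructing the product structure while respecting the fixed cyclic ordering on the parts of $X_G$ in the nonabelian case. In the abelian setting this is transparent --- the triangle identities form a linear system whose solutions are precisely the ``difference-of-coordinates'' maps, and a single basepoint normalization yields the required $\alpha, \beta, \gamma$ --- but in general one must verify that the basepoint normalizations are consistent and that the recovered $\alpha, \beta, \gamma$ actually reproduce every $\phi_j^{(i)}$.
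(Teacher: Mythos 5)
Your proposal follows essentially the same route as the paper's proof: the forward direction passes to the quotient sets $S_iT_i^{-1}$, $T_iU_i^{-1}$, $U_iS_i^{-1}$ and reads induced-ness off the TPP (within blocks) and STPP condition~(2) (across blocks), and the backward direction reconstructs the product structure by fixing basepoints and chaining the triangle identities, which the paper carries out with the explicit choices $x_0 = 1$, $x_i = a_{i0}a_{00}^{-1}$, $y_j = a_{0j}^{-1}$, $z_k = c_{k0}$. The one place you are slightly indirect is the within-block induced-ness check: the triangle identity $s_1t_1^{-1}\,t_2u_2^{-1}\,u_3s_3^{-1} = I$ is already (after conjugation and permutation-invariance of the TPP) an instance of the TPP hypothesis, so no comparison against an auxiliary ``reference MM hyperedge'' is needed.
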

\begin{proof}
It follows from the first condition of the STPP that for all $i$, the subhypergraph induced by $A_{i} :=  S_iT_i^{-1}, B_{i} := T_iU_i^{-1} , C_{i} := U_iS_i^{-1}$ equals $M_{|S_i|, |T_i|, |U_i|}$. The second condition implies that $A_{i}$ and $A_{j}$ are disjoint when $i \neq j$, and similarly for the subsets of the other parts. The second condition also implies that the only hyperedges in the subhypergraph induced by $\sqcup_i A_{i}, \sqcup_i B_{i},\sqcup_i C_{i}$ are between sets of the form $A_{i}, B_{i}, C_{i}$, so the claim follows.

Conversely, suppose that $\sqcup_i A_{i}, \sqcup_i B_{i}, \sqcup_i C_{i}$ induce disjoint hypergraphs $M_{p_i, q_i, r_i}$. Fix some $i$, and for shorthand write $A:=A_{i}, B:=B_{i}, C:=C_{i}$ and let $p :=p_i, q:=q_i, r:=r_i$. Since $A,B,C$ induce $M_{p,q,r}$, we can by definition write $A =\{a_{ij}\}_{i \in [p],j \in [q]}, B =  \{b_{ij}\}_{i \in [q],j \in [r]}, C = \{c_{ij}\}_{i \in [r],j \in [p]} \in G$ where
\begin{equation}\label{eqn:tppgen}
a_{ij}b_{kl}c_{mn} = I \iff j=k, l=m, n=i.
\end{equation}
We claim that there exist $X = \{x_i\}_{i \in [p]}, Y = \{y_j\}_{j \in [q]}, Z = \{z_k\}_{k \in [r]}$ such that $a_{ij} = x_iy_j^{-1}$, $b_{jk} = y_jz_k^{-1}$, $c_{ki} = z_kx_i^{-1}$ for all $i \in [p], j \in [q], k \in [r]$. This can be accomplished by taking $x_0 = 1, x_i = a_{i0}a_{00}^{-1}$ for $i>0$, $y_i = a_{0i}^{-1}, z_i = c_{i0}$. Furthermore, \Cref{eqn:tppgen} implies that $X,Y,Z$ will satisfy the TPP. This shows that for each $i$ there are $X_i, Y_i, Z_i$ such that $A_{1,i} = X_iY_i^{-1}, A_{2,i} = Y_iZ_i^{-1}, A_{3,i} = Z_iX_i^{-1}$, and $X_i, Y_i, Z_i$ satisfy the TPP. The fact that they induce a disjoint union of hypergraphs implies that if $a \in A_{i}, b \in B_j, c \in C_k$, then $abc=I$ implies $i=j=k$, which implies the second condition in the definition of the STPP.
\end{proof}

\begin{remark}
The second direction of this proposition is essentially the fact that a complete 2-dimensional simplicial complex has trivial 1-cohomology with coefficients in any group.
\end{remark}

\subsection{Triangle Removal and the Group-Theoretic approach}\label{sec:trirem}

In \cite{serra2009combinatorial}, a nonabelian generalization of Green's arithmetic removal lemma \cite{green2005szemeredi} was shown to follow from the directed graph removal lemma  of Alon and Shapira \cite{alon2003testing}. Specifically, they showed the following:

\begin{theorem}\label{thm:groupremoval}
Let $G$ be a finite group of order $N$. Let $A_1, \ldots, A_m, m \ge 2$, be
sets of elements of $G$ and let $g$ be an arbitrary element of $G$. If the equation
$x_1x_2 \cdots x_m = g$ has $o(N^{m-1})$ solutions with $x_i \in A_i$, then there are subsets $A'_i \subseteq A_i$ with $|A_i \setminus A'_i| = o(N)$ such that there is no solution of the equation $x_1x_2 \cdots x_m = g$ with $x_i \in  A_i'$.
\end{theorem}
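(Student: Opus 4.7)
The plan is to reduce to the directed graph removal lemma of Alon and Shapira by encoding solutions of $x_1 \cdots x_m = g$ as directed $m$-cycles in an auxiliary multipartite graph. I would assume $m \ge 3$; the case $m = 2$ is trivial since $x_1$ determines $x_2$ from the equation. Build an $m$-partite directed graph $H$ with vertex classes $V_1, \ldots, V_m$, each a copy of $G$: for $i = 1, \ldots, m-1$, draw a directed edge $v_i \to v_{i+1}$ whenever $v_i^{-1} v_{i+1} \in A_i$, and absorb the shift by $g$ into the last edge class by drawing $v_m \to v_1$ whenever $v_m^{-1} v_1 g \in A_m$. Crucially, for each $i$ the edges from $V_i$ to $V_{i+1 \bmod m}$ partition into $|A_i|$ pairwise disjoint perfect matchings $M_i^{(a)} = \{(v, va) : v \in V_i\}$ indexed by $a \in A_i$, and directed $m$-cycles $v_1 \to \cdots \to v_m \to v_1$ in $H$ correspond bijectively to pairs $(v_1, (a_1, \ldots, a_m))$ where $(a_1, \ldots, a_m)$ is a solution with $a_i \in A_i$. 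The number of $m$-cycles in $H$ therefore equals $N$ times the number of solutions, giving $o(N^m)$ forbidden cycles; the directed graph removal lemma then supplies a set $D \subseteq E(H)$ with $|D| = o(N^2)$ whose removal makes $H$ free of such cycles.

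The second step converts edge-deletions to element-deletions. I would declare $a \in A_i$ \emph{bad} if $|M_i^{(a)} \cap D| \ge N/(2m)$; since the matchings are pairwise disjoint, there are at most $|D|/(N/(2m)) = o(N)$ bad elements in each $A_i$, and I take $A_i'$ to be the good elements. Suppose for contradiction that some tuple $(a_1, \ldots, a_m) \in A_1' \times \cdots \times A_m'$ is still a solution. Its $N$ corresponding $m$-cycles in $H$ are parametrized by $v_1 \in V_1$; a cycle is killed only when one of its $m$ edges lies in $D$, and for each index $i$ the set of $v_1$'s producing a killed $i$-th edge has size $< N/(2m)$. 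A union bound over $i \in [m]$ kills at most $N/2$ cycles, leaving at least $N/2 > 0$ surviving $m$-cycles in $H \setminus D$, contradicting the choice of $D$.

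The main obstacle is the quantitative averaging in the final step: the threshold $N/(2m)$ must be chosen so that simultaneously (i) few elements per class are bad, so the $o(N)$ deletion budget of the theorem is met, and (ii) every surviving putative solution yields enough surviving cycles to reach the contradiction; the obvious threshold $N/2$ would make the union bound vacuous. A closely related technical point is that the shift by $g$ must be absorbed into one of the $A_i$-matchings rather than handled by a separate ``$g$-matching'', so that every edge of $H$ is indexed by an element of some $A_i$ and no edge deletions escape the element-level accounting. Once these choices are set, the remainder is bookkeeping.
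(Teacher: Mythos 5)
The paper does not actually prove \Cref{thm:groupremoval}; it is quoted verbatim from the cited reference \cite{serra2009combinatorial} (Kr\'al', Serra, Vena), where it is deduced from the Alon--Shapira directed graph removal lemma. Your argument is a correct proof and is essentially the same reduction used in that reference: build the $m$-partite blow-up digraph in which the edges from $V_i$ to $V_{i+1}$ decompose into $|A_i|$ disjoint perfect matchings indexed by $A_i$ (with the shift $g$ folded into the last class), observe that directed $m$-cycles correspond bijectively to pairs (solution, $v_1$) so there are $o(N^m)$ of them, apply the removal lemma to get a deletion set $D$ of $o(N^2)$ edges, and then discard an element $a\in A_i$ only when its matching meets $D$ in at least $N/(2m)$ edges, so that at most $2m|D|/N=o(N)$ elements are discarded per class while any surviving solution would still contribute at least $N - m\cdot N/(2m) = N/2$ intact cycles, a contradiction. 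The choice of threshold and the absorption of $g$ into one matching class, which you flag as the delicate points, are handled correctly.
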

The best quantitative bounds for this theorem are due to Fox \cite{fox2011new}, and imply that if there are at most $\delta N^{m-1}$ solutions to $x_1 \cdots x_m = g$, one can remove subsets of $A_i$ of size $\varepsilon N$ and eliminate all solutions, when $\delta^{-1}$ is a tower of twos of height $O(\log \varepsilon^{-1})$.

\Cref{thm:groupremoval} implies the following.
\begin{corollary}\label{cor:packingbarrier}
If $X_i,Y_i,Z_i$ satisfy the STPP in a group $G$ of order $n$, then at least one of $\sum |X_i||Y_i|, \sum |X_i||Z_i|, \sum |Y_i||Z_i|$ is at most $o(n)$.
\end{corollary}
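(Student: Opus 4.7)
The plan is to apply the group-theoretic removal lemma (\Cref{thm:groupremoval}) to the disjoint unions $A = \bigsqcup_i A_i$, $B = \bigsqcup_i B_i$, $C = \bigsqcup_i C_i$ with $A_i = X_iY_i^{-1}$, $B_i = Y_iZ_i^{-1}$, $C_i = Z_iX_i^{-1}$. By the discussion preceding \Cref{prop:pack}, $|A|,|B|,|C|$ are the three sums in the statement, and the second STPP condition ensures that the number of triples $(a,b,c) \in A \times B \times C$ with $abc = I$ is exactly $T := \sum_i|X_i||Y_i||Z_i|$, all coming from a single block. First I would show $T \le n^{3/2}$: writing $T = \sum_i\sqrt{(|X_i||Y_i|)(|Y_i||Z_i|)(|X_i||Z_i|)}$, applying Hölder with three exponents equal to $3$, and using $\sum_i(|X_i||Y_i|)^{3/2} \le n^{1/2}\sum_i|X_i||Y_i| \le n^{3/2}$ (and similarly for the other two factors), I get $T \le n^{3/2} = o(n^2)$.

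With this bound in hand, \Cref{thm:groupremoval} with $m = 3$, $g = I$ and sets $(A,B,C)$ produces $A'\subseteq A$, $B'\subseteq B$, $C'\subseteq C$ with $|A\setminus A'|,|B\setminus B'|,|C\setminus C'| = o(n)$ such that no triple in $A' \times B' \times C'$ sums to $I$. Since the triangles in $A \times B \times C$ decompose as a disjoint union of the copies of $M_{|X_i|,|Y_i|,|Z_i|}$ contributed by each STPP block, this forces each block's induced subhypergraph to be triangle-free, so the removed vertices restricted to each block form a vertex cover. A direct calculation (matching the fractional LP value $pqr/\max(p,q,r)$ by taking one entire part of $M_{p,q,r}$) shows that the minimum vertex cover of $M_{p,q,r}$ has size exactly $\min(pq,qr,pr)$, and summing over blocks yields $\sum_i \min(|X_i||Y_i|, |Y_i||Z_i|, |X_i||Z_i|) = o(n)$.

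To conclude that at least one of the three global sums is $o(n)$, I would partition the blocks by which of $|X_i|,|Y_i|,|Z_i|$ is largest. If $|Z_i|$ is largest then $\min(|X_i||Y_i|, |Y_i||Z_i|, |X_i||Z_i|) = |X_i||Y_i|$, so blocks of this ``$|Z|$-dominated'' type contribute at most $\sum_i\min_i = o(n)$ to $\sum_i|X_i||Y_i|$, and analogous statements hold for the other two types and sums. The hard part will be controlling the remaining contributions to each global sum, which are not directly bounded by $\sum_i\min_i$. For this I would invoke the second STPP condition: for $i \neq j$, the set $(A_iB_j)^{-1}$ is disjoint from $C$, giving $|A_iB_j| + |C| \le n$ and hence (via the trivial $|A_iB_j| \ge \max(|A_i|,|B_j|)$) $\max(|A_i|,|B_j|) + |C| \le n$, together with its two cyclic permutations. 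Combining these pairwise STPP constraints with the block-wise bound $\sum_i\min_i = o(n)$ so as to force a single global sum (and not merely $n/3 + o(n)$) to be $o(n)$ is the delicate final step of the argument, which I expect will require careful bookkeeping keyed to the dominance type of each block.
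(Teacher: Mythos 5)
Your setup is correct and your vertex-cover computation for a single copy of $M_{p,q,r}$ (minimum cover $\min(pq,qr,pr)$, matching the fractional bound $pqr/\max(p,q,r)$) is a nice observation. But the argument breaks exactly where you flag it, and the gap is not a matter of ``careful bookkeeping''---the implication you are trying to establish is simply false at the level of generality you are working at.

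The removal lemma only gives you $\sum_i \min(|X_i||Y_i|,|Y_i||Z_i|,|Z_i||X_i|) = o(n)$, and that does not force any one of the three \emph{global} sums to be $o(n)$. Consider an abstract block profile with three blocks of sizes (up to constants) $(|X|,|Y|,|Z|)$ equal to $(\sqrt{n/3},1,\sqrt{n/3})$, $(1,\sqrt{n/3},\sqrt{n/3})$, and $(\sqrt{n/3},\sqrt{n/3},1)$. Each block has $\min_i = \sqrt{n/3}$, so $\sum_i\min_i = O(\sqrt n) = o(n)$, while each of $\sum|X_i||Y_i|$, $\sum|Y_i||Z_i|$, $\sum|Z_i||X_i|$ is $\Theta(n)$, and all three packing constraints $\le n$ are satisfied. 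The extra pairwise inequalities you extract from the second STPP condition, $\max(|A_i|,|B_j|) + |C| \le n$, are also consistent with this profile (each side comes out around $2n/3$). Of course no such STPP can actually exist---that is the content of the corollary---but your chain of deductions cannot rule it out because the reduction to $\sum_i\min_i$ has already discarded the information that distinguishes realizable constructions from this abstract profile.

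The paper's proof takes a genuinely different route that avoids per-block vertex covers entirely. It proves a \emph{robustness} statement: any subsets $B_j\subseteq A_j$ of density $>0.9999$ must still contain a solution to $b_1b_2b_3=I$. This is shown by a density/pigeonhole argument: most of the mass of each $B_j$ must land densely in a common block $M_{p_i,q_i,r_i}$, and within a single $M_{p,q,r}$ a density-$0.99$ sub-tripartite hypergraph always has a triangle (by a short degree-counting argument, which crucially uses that $M_{p,q,r}$ is a \emph{complete} tripartite hypergraph in the relevant sense, not just that its vertex cover number is small). Combining this with the removal lemma immediately forces some $|A_j| = o(n)$: if all three were $\Omega(n)$, the post-removal sets would have density $1-o(1)>0.9999$ and still contain a solution, a contradiction. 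To repair your proof you would need to replace the ``vertex cover is small'' conclusion with this stronger ``dense subsets of the blocks still have triangles'' conclusion, which is a qualitatively different use of the structure of $M_{p,q,r}$.
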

\begin{proof}
	Let $A_1 = \sqcup_i X_iY_i^{-1}, A_2 = \sqcup_i Y_iZ_i^{-1}, A_3 = \sqcup_i Z_iX_i^{-1}$. By definition of the STPP, the equation $x_1x_2x_3=I$ with $x_i \in A_i$ has $\sum_i |X_i||Y_i||Z_i|$ solutions. By the packing bound \Cref{prop:pack}, $\sum_i |X_i||Y_i| \le n, \sum_i |Y_i||Z_i| \le n, \sum_i |Z_i||X_i| \le n$, so by Cauchy--Schwarz there are at most $n^{3/2} = o(n^2)$ solutions to $a_1a_2a_3=I$. 
	
	Now suppose that $B_j \subseteq A_j$ satisfy $|B_j|/|A_j| > 0.9999$; we will show that there is a solution to $b_1b_2b_3=I$. For more than a $0.99$ fraction of the values of $i$ we must have $|B_1 \cap X_iY_i^{-1} |/|X_iY_i^{-1}| > 0.99$ (because $0.99 \cdot 1 + 0.01 \cdot 0.99 = 0.9999$) and similarly for the other sets. Hence by the pigeonhole principle there is some $i$ for which $|B_1 \cap  X_iY_i^{-1}|/|X_iY_i^{-1}| > 0.99, |B_2 \cap  Y_iZ_i^{-1}|/| Y_iZ_i^{-1}| > 0.99, |B_3 \cap  Z_iX_i^{-1}|/| Y_iZ_i^{-1}| > 0.99.$	Now consider the tripartite graph with parts $X_i, Y_i, Z_i$, where $(x,y)$ is an edge between $X_i$ and $Y_i$ if $xy^{-1} \in  B_1 \cap  X_iY_i^{-1}$, $(y,z)$ is an edge between $Y_I, Z_i$ when $yz^{-1} \in B_2 \cap  Y_iZ_i^{-1}$, and $(z,x)$ is an edge when $zx^{-1} \in B_3 \cap  Z_iX_i^{-1}$. Note that the existence of a triangle in this graph implies that there is a solution to $b_1b_2b_3=I$. First, note that at least $0.9|X_i|$ vertices in $X_i$ have at least $0.9|Y_i|$ neighbors in $Y_i$. (If this were not the case, there would be at most $0.9|X_i||Y_i| + 0.1 \cdot 0.9 \cdot |X_i||Y_i| \le 0.99|X_i||Y_i|$ edges between $X_i$ and $Y_i$, and hence $|B_1 \cap X_iY_i^{-1} |/|X_iY_i^{-1}| \le 0.99$, a contradiction.) Similarly, at least $0.9|X_i|$ vertices in $X_i$ have at least $0.9|Z_i|$ neighbors in $Z_i$. Hence at least $0.8|X_i|$ vertices in $X_i$ have $0.9|Y_i|$ neighbors in $Y_i$ and $0.9|Z_i|$ neighbors in $Z_i$. Pick any such vertex $x_0 \in X_i$. There must be an edge between a neighbor of $x_0$ in $Y_i$ and a neighbor of $x_0$ in $Z_i$, since if not, there would be at most $|Y_i||Z_i| - 0.9^2|Y_i||Z_i| = 0.19|Y_i||Z_i|$ edges between $Y_i$ and $Z_i$. Thus we have found our triangle.
	
	By \Cref{thm:groupremoval}, we can delete subsets of $A_i$ of size $o(n)$ to eliminate all solutions to $x_1x_2x_3=I$. On the other hand, any three subsets of the $A_i$'s of density $0.9999$ contain some such solution. Hence we must have $|A_i| = o(n)$ for some $i$.  \qedhere
\end{proof}

As a corollary of this proof, we have the following.

\begin{corollary}\label{cor:removaltight}
There exists an absolute constant $C > 1$ such that if $X_i,Y_i,Z_i$ satisfy the STPP in $\Z_q^\ell$, then at least one of $\sum |X_i||Y_i|, \sum |X_i||Z_i|, \sum |Y_i||Z_i|$ is at most $(q/C)^\ell$.
\end{corollary}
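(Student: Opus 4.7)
The plan is to repeat the argument of \Cref{cor:packingbarrier} essentially verbatim, but invoke the tight triangle removal lemma available in $\Z_q^\ell$ (via \cite{fox2017tight} and the slice-rank / Croot--Lev--Pach method) in place of the tower-type bound of Fox used there. Set $A_1 = \sqcup_i X_iY_i^{-1}$, $A_2 = \sqcup_i Y_iZ_i^{-1}$, $A_3 = \sqcup_i Z_iX_i^{-1}$, let $n = q^\ell$, and note that, exactly as before, the STPP gives $T = \sum_i |X_i||Y_i||Z_i|$ solutions to $a_1a_2a_3=I$ with $a_j\in A_j$, while the packing bound and Cauchy--Schwarz give $T \le n^{3/2}$; the solution density is thus $\delta := T/n^2 \le n^{-1/2}$.

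The key input is that in $\Z_q^\ell$ the triangle removal lemma holds with a \emph{polynomial} rate rather than a tower-type one: there is a constant $c_0 > 0$ such that every such configuration can be made solution-free by deleting at most $\delta^{c_0} n$ elements from each $A_j$. Plugging in $\delta \le n^{-1/2}$ gives a removal budget of $n^{1-c_0/2}$ per $A_j$. On the other hand, the pigeonhole plus dense-graph-has-triangle argument in the proof of \Cref{cor:packingbarrier} still applies in our setting and shows that whenever each surviving subset has density at least $0.9999$ in $A_j$, a solution persists.

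Now suppose for contradiction that all three of $\sum|X_i||Y_i|,\sum|X_i||Z_i|,\sum|Y_i||Z_i|$ exceed $(q/C)^\ell = n/C^\ell$. Then the removal fraction from each $A_j$ is at most $n^{1-c_0/2}/|A_j| < (C/q^{c_0/2})^\ell$. Choosing $C > 1$ to be any absolute constant with $C < 2^{c_0/2} \le q^{c_0/2}$ for every prime power $q \ge 2$, this fraction tends to $0$ with $\ell$ and in particular drops below $10^{-4}$ for all $\ell$ past an absolute threshold $\ell_0$; the dense-graph argument then produces a surviving solution, contradicting the removal lemma. The remaining finitely many cases $\ell \le \ell_0$ (for each fixed small $q$) can be absorbed by enlarging $C$ if needed.

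The main obstacle I anticipate is arranging for the polynomial exponent $c_0$ to be genuinely uniform in $q$. The reference \cite{fox2017tight} is stated for vector spaces $\F_p^n$, while $\Z_{p^k}^\ell$ for $k > 1$ is a cyclic-factor group rather than a vector space, and the Kleinberg--Sawin--Speyer bound on tricolored sum-free sets has $\theta(q)/q \to 1$ as $q \to \infty$. One route is to apply the polynomial method directly in $\Z_{p^k}^\ell$ using its ring structure; another is to embed $\Z_{p^k}^\ell$ into a larger $\F_p$-vector space that preserves the zero-sum count, then quote \cite{fox2017tight}. Either way, one must verify that the absolute constant $C$ emerging at the end is strictly greater than $1$ uniformly over all prime powers $q$.
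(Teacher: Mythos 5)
Your approach is the same as the paper's: rerun the argument of \Cref{cor:packingbarrier} verbatim, swapping the tower-type removal bound for the Fox--Lov\'asz removal lemma in $\Z_q^\ell$. A few remarks on the details.

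The quantitative form you write down --- a removal budget of $\delta^{c_0}n$ with a \emph{universal} exponent $c_0 > 0$ --- is not what \cite{fox2017tight} actually provides. The correct form is $\delta = (\varepsilon/3)^{\Theta(\log q)}$, which in your notation means $c_0 = \Theta(1/\log q)$; it genuinely does shrink with $q$. Fortunately this does not break the argument, because the dependence on $q$ cancels at exactly the right place: plugging $\delta = q^{-\ell/2}$ into $\varepsilon/3 = \delta^{1/\Theta(\log q)}$ gives $\varepsilon = 3\,q^{-\Theta(\ell/\log q)} = 3\,(C')^{-\ell}$ for a universal $C' > 1$, since $q^{1/\log q}$ is a fixed constant. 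The removal budget is then $3(q/C')^\ell$, so the fraction removed from each $A_j$ is at most $3(C/C')^\ell$, which drops below $10^{-4}$ once $\ell$ exceeds a universal threshold $\ell_0$. In other words, the uniformity-in-$q$ worry you flag at the end evaporates once the $\log q$ in the exponent is tracked correctly --- this is precisely what makes the paper's computation come out to a constant $C' > 1$ independent of $q$.

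For the extension from $\F_p^\ell$ to $\Z_q^\ell$: the paper does exactly the first of your two suggested routes, reproving the removal bound in $\Z_{p^k}^\ell$ using the tricolored sum-free set bound of \cite[Theorem A']{blasiak2017cap}; this is handled in a footnote. Your second route (embedding into an $\F_p$-vector space while preserving zero-sum counts) is not what's done and would require more care, since such an embedding cannot preserve \emph{all} zero-sum triples.

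Finally, a small point: ``absorbing $\ell \le \ell_0$ by enlarging $C$'' is the wrong direction. Enlarging $C$ makes $(q/C)^\ell$ smaller and hence the claimed bound \emph{stronger}; to weaken it for small $\ell$ one would shrink $C$ toward $1$. In practice the small-$\ell$ regime is immaterial for the downstream applications, which drive $\ell \to \infty$, but as stated the sentence goes backwards.
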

\begin{proof}
The proof of \Cref{cor:packingbarrier} shows that $A_1 = \sqcup_i X_iY_i^{-1}, A_2 = \sqcup_i Y_iZ_i^{-1}, A_3 = \sqcup_i Z_iX_i^{-1}$ have the following properties: there are at most $q^{3n/2}$ solutions to $a_1+a_2+a_3=0$, and any subsets of $A_1, A_2, A_3$ of density $0.9999$ each contain some such solution. At the same time, by \cite[Theorem 1]{fox2017tight}, if $A_1, A_2, A_3 \subseteq \Z_q^\ell$ and there are less than $\delta q^{2n}$ solutions to $a_1+a_2+a_3=0$, then we may remove $\varepsilon q^\ell$ elements from $A_1 \cup A_2 \cup A_3$ and eliminate all solutions, when $\delta = (\varepsilon/3)^{\Theta(\log q)}$.\footnote{While \cite[Theorem]{fox2017tight} is only stated for $\Z_p^\ell$, it extends to $\Z_q^\ell$ by the same argument via the use of \cite[Theorem A']{blasiak2017cap}.} In our setting, $\delta = q^{-n/2}$ and so $\varepsilon = 3 q^{\Theta(-n/\log q)} \le 3 C'^{-n}$ for some universal $C'$. Hence it must have been the case that one of $A_1, A_2, A_3$ had size at most $(q/C)^\ell$ to begin with, for some universal $C$. \qedhere
\end{proof}

One can interpret \Cref{{cor:packingbarrier}} as saying that the best upper bound on the rank of a direct sum of matrix multiplication tensors provable via the group--theoretic approach is superlinear. We remark the only important property of the matrix multiplication hypergraph for this result was that it satisfies a very weak ``regularity" condition. Specifically, considerations similar to those of the proof of \Cref{{cor:packingbarrier}} show the following:

\begin{theorem}\label{thm:quasiremoval}
Let $\varepsilon > 0$. Let $G$ be a group of order $n$. Let $X = \sqcup_{i=1}^3 A_i$ be a tripartite hypergraph with $o(n^2)$ triangles such that for any $Y_i \subseteq A_i$ with $|Y_i|/n \ge 1-\varepsilon$, there exists $y_i \in Y_i$ such that $(y_1, y_2, y_3) \in E(X)$. Then if $X$ is an induced subhypergraph of $X_G$, $|A_i| \le o(n)$ for $i=1,2,3$.
\end{theorem}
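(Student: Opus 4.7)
The plan is to generalize \Cref{cor:packingbarrier} by isolating its two essential ingredients: \Cref{thm:groupremoval}, which converts the triangle-count bound into a small deletion that kills every triangle, and the quasi-regularity hypothesis, which forbids such a deletion from being possible.

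First I would apply \Cref{thm:groupremoval} with $m = 3$ to the sets $A_1, A_2, A_3 \subseteq G$ and target element $g = I$. Since $X$ is an induced subhypergraph of $X_G$, the triangles of $X$ are exactly the solutions of $a_1 a_2 a_3 = I$ with $a_i \in A_i$, of which there are $o(n^2) = o(n^{m-1})$ by hypothesis. For every $\eta > 0$ and all sufficiently large $n$, this yields $A_i' \subseteq A_i$ with $|A_i \setminus A_i'| \le \eta n$ such that the induced subhypergraph on $\sqcup_i A_i'$ contains no triangle.

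Next I would combine this with the quasi-regularity hypothesis to force each $|A_i|$ to shrink. Suppose for contradiction that some $|A_i| \ge cn$ along a subsequence, for a fixed $c > 0$. Taking $\eta \le c\varepsilon/2$ in the previous step makes the density of $A_i'$ inside $A_i$ at least $1 - \varepsilon/2$. A pigeonhole step mirroring the $0.9999$-argument in the proof of \Cref{cor:packingbarrier} then selects common subsets $Y_i \subseteq A_i'$ on which all three density losses are simultaneously small, which the quasi-regularity hypothesis guarantees contain a triangle of $X$ inside $\sqcup_i A_i'$ --- contradicting the removal step. Iterating over the three indices, the conclusion is $|A_i| = o(n)$ for each $i$.

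The main obstacle is the quantifier mismatch between the hypothesis $|Y_i|/n \ge 1 - \varepsilon$ (density measured in $G$) and the output of the removal lemma (density measured in $A_i$); once each $|A_i|$ is restricted to be a constant fraction of $n$ these two notions differ only by a constant and the contradiction goes through, and outside this regime the hypothesis is vacuous so the conclusion is trivially satisfied. Any quantitative sharpening is inherited directly from the quantitative removal lemma used, in the same way that \Cref{cor:packingbarrier} is upgraded to \Cref{cor:removaltight} for $\Z_q^\ell$ by substituting the sharper bound of \cite{fox2017tight}.
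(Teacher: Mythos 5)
The paper gives no explicit proof of \Cref{thm:quasiremoval}, only a pointer to ``considerations similar to those of the proof of \Cref{cor:packingbarrier}.'' Your proposal correctly identifies the two ingredients intended there: the removal lemma \Cref{thm:groupremoval} converts the $o(n^2)$ triangle bound into a deletion of $o(n)$ vertices killing all triangles, while the quasi-regularity hypothesis forbids any such deletion from working when the parts are dense.

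However, there is a genuine gap in how you dispatch the degenerate case. You write that once each $|A_i|$ fails to be a sufficiently large constant fraction of $n$, ``the hypothesis is vacuous so the conclusion is trivially satisfied.'' This is false: a vacuous hypothesis does not make the conclusion $|A_i| = o(n)$ trivially hold. Concretely, if $|A_1| = n/100$ and $\varepsilon = 0.01$, there is no $Y_1 \subseteq A_1$ with $|Y_1|/n \ge 1 - \varepsilon$, so the quasi-regularity hypothesis holds vacuously, yet $|A_1| = n/100$ is not $o(n)$. Worse, if only one part is small (making the hypothesis vacuous) while the others have size $\Theta(n)$, you get no contradiction and no bound on the large parts. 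To close the argument you would need to show that non-vacuity of the quasi-regularity hypothesis already forces $|A_i| \ge (1-\varepsilon)n$ for all $i$, and then derive the contradiction against the removal step; the conclusion $|A_i| = o(n)$ is then obtained not by verifying it directly but by showing the three hypotheses cannot simultaneously hold for large $n$ unless all parts shrink. (This tension is arguably present in the statement of the theorem itself, which is why the paper's usage in \Cref{prop:val2} really invokes the removal-plus-quasiregularity contradiction directly rather than the literal conclusion.)

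Two smaller points: the ``pigeonhole step mirroring the $0.9999$-argument'' is not needed here. In \Cref{cor:packingbarrier} the pigeonhole was required because the STPP sets are indexed by $i$ and one must find a single index $i$ on which all three densities are simultaneously high; in \Cref{thm:quasiremoval} there is no such indexing, and $Y_i := A_i'$ already does the job once the density bookkeeping is sorted. Likewise ``iterating over the three indices'' is superfluous --- the removal lemma treats all three parts at once, and the contradiction is symmetric.
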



\section{Equilateral trapezoid-freeness in hypergraphs and groups} \label{sec:hyper}

We begin with the observation that the matrix multiplication hypergraph is an extremal solution to a certain forbidden hypergraph problem.
\begin{proposition}\label{prop:extremalprob}
	Let $X$ be a linear tripartite hypergraph with parts of size $N$ such that any two vertices from different parts are incident to at most one common vertex in the third part. Then the number of triangles in $X$ is at most $N^{3/2}$. Furthermore, when $N$ is a square, an extremal example is the matrix multiplication hypergraph $M_{N^{1/2}}$.
\end{proposition}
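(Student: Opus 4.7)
The plan is a short Cauchy--Schwarz argument using one part, say $V_3$, to index triangles. Write $t$ for the number of triangles of $X$, and for $v_3 \in V_3$ let $d(v_3)$ denote the number of triangles containing $v_3$, so that $t = \sum_{v_3 \in V_3} d(v_3)$.

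First I would use linearity to observe that for each $v_3 \in V_3$ the ``link'' $L(v_3) := \{(v_1, v_2) \in V_1 \times V_2 : \{v_1, v_2, v_3\} \in E(X)\}$ is a matching in $V_1 \times V_2$: two pairs in $L(v_3)$ sharing a first coordinate $v_1$ would place the pair $\{v_1, v_3\}$ in two hyperedges, contradicting linearity, and similarly for second coordinates. Writing $u \sim w$ to mean that $u$ and $w$ lie in a common hyperedge, this matching property gives
\[ d(v_3) \;=\; |\{v_1 \in V_1 : v_1 \sim v_3\}| \;=\; |\{v_2 \in V_2 : v_2 \sim v_3\}|. \]

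Multiplying these two expressions for $d(v_3)$ and summing then yields
\[ \sum_{v_3 \in V_3} d(v_3)^2 \;=\; \bigl|\{(v_1, v_2, v_3) \in V_1 \times V_2 \times V_3 : v_1 \sim v_3 \text{ and } v_2 \sim v_3\}\bigr| \;\le\; N^2, \]
where the inequality uses the hypothesis: for each $(v_1, v_2) \in V_1 \times V_2$ there is at most one $v_3 \in V_3$ with $v_1 \sim v_3$ and $v_2 \sim v_3$. Cauchy--Schwarz then gives $t^2 \le N \sum_{v_3} d(v_3)^2 \le N^3$, i.e.\ $t \le N^{3/2}$, as required.

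For the extremality claim, when $N = n^2$ I would verify directly that $M_n$ is linear and satisfies the hypothesis: for $(i,j) \in V_1$ and $(k,l) \in V_2$ the only $v_3 \in V_3$ with $v_3 \sim (i,j)$ and $v_3 \sim (k,l)$ is $(l,i)$, since the first forces $v_3 \in \{(m,i) : m \in [n]\}$ and the second forces $v_3 \in \{(l, i') : i' \in [n]\}$. Counting hyperedges of $M_n$ by triples $(i, j, k) \in [n]^3$ gives exactly $n^3 = N^{3/2}$, saturating the bound. I do not anticipate any real obstacle here; the whole argument is essentially one line of Cauchy--Schwarz once the link-is-a-matching observation is in place.
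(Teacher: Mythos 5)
Your proof is correct and follows essentially the same route as the paper: both arguments establish the key inequality $\sum_v d_v^2 \le N^2$ for the degree sequence from one part and then finish with Cauchy--Schwarz, and both check the extremality of $M_n$ by directly verifying linearity and the unique-common-neighbor condition. Your derivation of $\sum d_v^2 \le N^2$ (counting ordered triples $(v_1,v_2,v_3)$ with $v_1 \sim v_3$ and $v_2 \sim v_3$ and applying the hypothesis to the pair $(v_1,v_2)$) is a modestly cleaner bookkeeping than the paper's version, which tallies pairs incident to a fixed vertex and argues their disjointness across different vertices, but the underlying idea is identical.
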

The hypergraphs satisfying the condition of \Cref{prop:extremalprob} can be equivalently characterized as the linear hypergraphs that do not contain copies of the hypergraphs in \Cref{fig:forbid}. We remark that the proof of the upper bound in \Cref{prop:extremalprob} is closely related to the upper bound on the Tur\'an density of the 4-cycle.

\begin{figure}
\begin{center}
\includegraphics[scale=1]{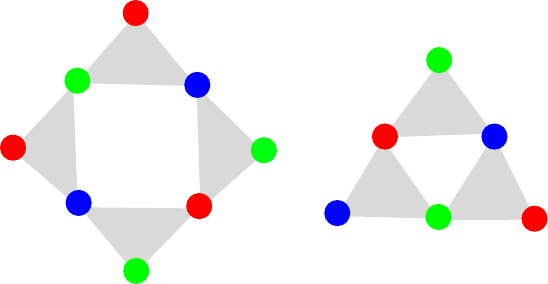}
\caption{The forbidden hypergraphs in \Cref{prop:extremalprob}, up to permutations of the three parts (represented by different colors).}
\label{fig:forbid}
\end{center}
\end{figure}

\begin{proof}
	Restricting our attention to one of the parts $X_1$ of $X$, let $d_v$ be the number of triangles that vertex $v \in X_1$ is contained in. Each $v \in X_1$ is contained in $d_v$ triangles, where the vertices of these triangles belonging to $X_2$ and $X_3$ are distinct (as $X$ is linear). Additionally, no pair of such vertices in $X_2$ and $X_3$ can be contained in a triangle incident to another vertex $u \in X_1$, so there are $2 \binom{d_v}{2}$ pairs of vertices in $X_2$ and $X_3$ that are contained in no common triangle. Let $(x_2, x_3)$ be some such pair of vertices. Observe that furthermore, for all $u \neq v \in X_1$, the set of vertices in $X_2$ and $X_3$ incident to the set of triangles containing $u$ cannot also contain both $x_2$ and $x_3$. For if this happened, there would be triangles $(v,x_2,x_3'), (v, x_2', x_3), (u, x_2, x_3''), (u, x_2'', x_3)$, and then $x_2$ and $x_3$ violate the constraint. The total number of triangles equals $m := \sum_{v \in X_1} d_v$, and by the prior observations it follows that   $\sum 2\binom{d_v}{2} +m \le N^2$. So $\sum d_v(d_v-1) + m = \sum d_v^2 \le N^2$. The conclusion follows from Cauchy--Schwarz.
	
	 To see that $M_{N^{1/2}}$ is extremal, note that it contains $N^{3/2}$ triangles, has parts of size $N$, and is linear. To see that it satisfies the second condition, let $(i,j)$ be a vertex in the first part, and let $(k,l)$ be a vertex in the second part. Then $(i,j)$ is contained in a common triangle with exactly the vertices in the third part of the form $(*,i)$, and $(k,l)$ is incident to exactly the vertices in the third part of the form $(l,*)$. Hence $(l,i)$ is the unique neighbor of both. The same argument shows the claim for vertices in any two parts.
\end{proof}

The key definition in this paper is that of an ``equilateral trapezoid-free" triple of subsets of a group. The reason for this name will eventually be explained in \Cref{sec:zn}.

\begin{definition}\label{def:trap}
	Let $A, B, C \subseteq G$. We call $(A,B,C)$ equilateral trapezoid-free if the subhypergraph of $X_G$ induced by $A \subseteq X_1,B\subseteq X_2,C \subseteq X_3$ satisfies the conditions of \Cref{prop:extremalprob}. Equivalently, $(A,B,C)$ is equilateral trapezoid-free if for any fixed $a' \in A , b' \in B , c'\in C$, the following systems of equations in the variables $a \in A, b \in B, c \in C$ each have at most one solution:
	\begin{align*}
		I&=a'bc=ab'c,\\
		I&=a'bc=abc',\\
		I&=ab'c=abc'.
	\end{align*}
	Let $\Val(G)$ be the maximum number of solutions to $abc=I$ over all equilateral trapezoid-free triples $(A,B,C)$. 
\end{definition}

 The relevance of $\Val(G)$ to $\omega$ is due to the following.

\begin{proposition}\label{prop:stppval}
	Suppose that $X_G$ contains disjoint induced subhypergraphs $M_{n_i,m_i,p_i}$. Then, $\Val(G) \ge \sum_{i} n_im_ip_i$.
\end{proposition}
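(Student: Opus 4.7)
The plan is to produce an explicit equilateral trapezoid-free triple $(A,B,C)$ of subsets of $G$ that witnesses $\sum_i n_im_ip_i$ solutions to $abc=I$. For each $i$, let $A_i, B_i, C_i \subseteq G$ denote the three vertex parts (as subsets of $G$) of the $i$-th induced copy of $M_{n_i,m_i,p_i}$. Since the copies are disjoint, I would set $A := \sqcup_i A_i$, $B := \sqcup_i B_i$, $C := \sqcup_i C_i$ inside $G$ and argue that this triple does the job.

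First I would count triangles. By hypothesis the subhypergraph of $X_G$ induced by $(A,B,C)$ is precisely the disjoint union of the $M_{n_i,m_i,p_i}$'s, so the number of triples $(a,b,c)\in A\times B\times C$ with $abc=I$ is exactly $\sum_i n_im_ip_i$. Second, I would verify the equilateral trapezoid-free condition using the hypergraph characterization coming from \Cref{prop:extremalprob}: I need to check that the induced subhypergraph is linear and that any two vertices from different parts share at most one common neighbor in the third part. Because vertices from different copies share no hyperedge (again by the disjoint-induced assumption), both properties reduce to verifying them inside a single $M_{n,m,p}$. Given the explicit description of $M_{n,m,p}$ with hyperedges $((i,j),(j,k),(k,i))$, this is a direct check: vertices $(i,j)\in[n]\times[m]$ and $(j',k)\in[m]\times[p]$ lie in a common hyperedge only when $j=j'$ (and then the third vertex $(k,i)$ is unique, giving linearity), while the set of third-part vertices sharing a triangle with $(i,j)$ is $\{(k,i):k\in[p]\}$ and with $(j',k)$ is $\{(k,i'):i'\in[n]\}$, meeting in at most the single element $(k,i)$. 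The verification for vertices chosen from other pairs of parts is identical by the cyclic symmetry of $M_{n,m,p}$.

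The main subtlety — and really the only step that needs care — is correctly using the phrase ``disjoint induced subhypergraphs.'' Disjointness forces $A_i\cap A_j=\emptyset$ (and similarly for $B,C$) when $i\neq j$, and induced-ness together with disjointness rules out any stray ``cross-triangle'' $(a,b,c)$ with $a\in A_i$, $b\in B_j$, $c\in C_k$ where $(i,j,k)$ are not all equal. Once this is made explicit, both the triangle count and the reduction of the trapezoid-free condition to an intra-copy computation go through, and the rest is bookkeeping.
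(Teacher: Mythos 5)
Your proposal is correct and takes essentially the same route as the paper's proof: take the disjoint union of the three vertex parts across all the copies, count $\sum_i n_im_ip_i$ hyperedges, verify (via the explicit description of $M_{n,m,p}$, exactly as in the second half of the proof of \Cref{prop:extremalprob}) that each copy satisfies the conditions of \Cref{def:trap}, and observe that the disjoint-induced hypothesis rules out cross-triangles so the union remains equilateral trapezoid-free. The only difference is that you spell out the intra-copy check and the no-cross-triangle observation in more detail than the paper does.
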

\begin{proof}
	By the same reasoning as in the second part of the proof of \Cref{prop:extremalprob}, $M_{n_i, m_i, p_i}$ satisfies the constraints of \Cref{def:trap} and contains $n_im_ip_i$ hyperedges. As the disjoint union of these hypergraphs satisfies these constraints as well, the claim follows.
\end{proof}
In fact, STPP constructions are essentially the only approach we know of for proving lower bounds on $\Val(G)$.


To start, we have the following trivial bounds.

\begin{proposition}\label{prop:trivialvalbds}
	For any group $G$, $|G| \le \Val(G) \le |G|^{3/2}$.
\end{proposition}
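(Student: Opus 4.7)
The plan is to handle the two inequalities separately, both via the hypergraph framework developed in \Cref{prop:extremalprob}.

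For the lower bound $|G| \le \Val(G)$, I would exhibit an explicit equilateral trapezoid-free triple achieving $|G|$ triangles. The simplest candidate is $A = G$, $B = \{I\}$, $C = G$: the solutions to $abc = I$ are exactly $(a, I, a^{-1})$ for $a \in G$, giving $|G|$ of them. To verify the equilateral trapezoid-free condition of \Cref{def:trap}, note that in each of the three systems the variable $b$ is forced to equal $I$ (since $B$ is a singleton); the remaining equations are then each of the form ``product of two group elements equals $I$'' with one factor fixed in terms of $a', b', c'$, and so they pin down $a$ and $c$ uniquely. Hence each system has at most one solution and the lower bound follows.

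For the upper bound $\Val(G) \le |G|^{3/2}$, the natural approach is to apply \Cref{prop:extremalprob} to the subhypergraph of $X_G$ induced by an equilateral trapezoid-free triple $(A,B,C)$. A mild technicality is that \Cref{prop:extremalprob} is stated for parts of equal size $N$, while $A, B, C \subseteq G$ may have different cardinalities. I would circumvent this by repeating the short double-counting argument from the proof of \Cref{prop:extremalprob} verbatim: letting $d_a$ denote the number of $(b,c) \in B \times C$ with $abc = I$, one obtains $\sum_{a \in A} d_a^2 \le |B|\,|C|$, and then Cauchy--Schwarz gives
\[
m \;=\; \sum_{a \in A} d_a \;\le\; \sqrt{|A| \sum_{a \in A} d_a^2} \;\le\; \sqrt{|A|\,|B|\,|C|} \;\le\; |G|^{3/2},
\]
since $|A|, |B|, |C| \le |G|$.

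There is no real obstacle: both bounds are essentially immediate once one has the hypergraph perspective of \Cref{sec:hyper}, and \Cref{prop:trivialvalbds} functions as the baseline against which the rest of the paper's bounds (power-improvements over either endpoint, obtained via removal lemmas or constructions) are to be measured.
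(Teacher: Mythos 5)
Your proof is correct and takes essentially the same approach as the paper: the paper's lower-bound witness is the cyclic permutation $(\{I\}, G, G)$ of your triple (equivalent by \Cref{prop:symval}), and its upper bound simply cites \Cref{prop:extremalprob}, with the unequal-part-sizes adaptation you spell out left implicit.
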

\begin{proof}
	The lower bound is obtained by the triple $(\{I\}, G, G)$. The upper bound follows from \Cref{prop:extremalprob}.
\end{proof}

The following super-multiplicative behavior of $\Val$ is easily checked.
\begin{proposition}\label{prop:tensorval}
If $(A,B,C)$ is equilateral trapezoid-free in $G$, and $(A',B',C')$ is equilateral trapezoid-free in $H$, then $(A \times A', B \times B', C \times C')$ is a equilateral trapezoid-free in $G \times H$.
\end{proposition}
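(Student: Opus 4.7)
The plan is to unpack \Cref{def:trap} coordinate-by-coordinate: since $(x,y) = I$ in $G \times H$ iff $x = I$ in $G$ and $y = I$ in $H$, each of the three systems that witness equilateral trapezoid-freeness of $(A \times A', B \times B', C \times C')$ splits as the conjunction of the analogous systems in $G$ and in $H$, and both of those have at most one solution by hypothesis.

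Concretely, I would fix arbitrary $(a_0, \alpha_0) \in A \times A'$, $(b_0, \beta_0) \in B \times B'$, $(c_0, \gamma_0) \in C \times C'$ and consider, say, the first system in \Cref{def:trap}:
\[
 I = (a_0, \alpha_0)(b, \beta)(c, \gamma) = (a, \alpha)(b_0, \beta_0)(c, \gamma).
\]
Projecting onto $G$ gives $I = a_0 b c = a b_0 c$ in the variables $a \in A, b \in B, c \in C$, which has at most one solution by equilateral trapezoid-freeness of $(A,B,C)$. Projecting onto $H$ gives the analogous system in $(A',B',C')$, which likewise has at most one solution. Hence the joint system has at most one solution, and the same two-line argument handles the remaining two systems from \Cref{def:trap}.

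Since the number of triangles in the product triple factors as the product of the triangle counts in $(A,B,C)$ and $(A',B',C')$, taking equilateral trapezoid-free triples achieving $\Val(G)$ and $\Val(H)$ immediately yields the super-multiplicativity statement $\Val(G \times H) \ge \Val(G)\Val(H)$. The only step here is the observation that the projections $G \times H \to G$ and $G \times H \to H$ are homomorphisms, so I do not expect any genuine obstacle; the proposition just records a basic closure property of the definition under direct products.
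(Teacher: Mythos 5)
Your proof is correct and is exactly the routine coordinate-wise verification the paper has in mind; the paper states the proposition without proof (``easily checked''), and your projection argument is the natural way to check it. The added remark that triangle counts multiply, giving $\Val(G \times H) \ge \Val(G)\Val(H)$, also matches the super-multiplicativity the paper attributes to this proposition.
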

It is also easily seen that being equilateral trapezoid-free is preserved by cyclic permutations of the three sets.
\begin{proposition}\label{prop:symval}
If $(A,B,C)$ is equilateral trapezoid-free, then so is $(B,C,A)$.
\end{proposition}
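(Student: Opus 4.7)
The plan is to exploit the cyclic identity $g_1 g_2 g_3 = I \iff g_2 g_3 g_1 = I$, which holds in any group $G$ since $g_1 g_2 g_3 = I$ rearranges to $g_2 g_3 = g_1^{-1}$, whence $g_2 g_3 g_1 = I$ (and symmetrically in the reverse direction). Applied to the hyperedge relation defining $X_G$, this shows that the map $(a,b,c) \mapsto (b,c,a)$ is a bijection between triangles of the induced subhypergraph on $(A \subseteq X_1, B \subseteq X_2, C \subseteq X_3)$ and triangles of the induced subhypergraph on $(B \subseteq X_1, C \subseteq X_2, A \subseteq X_3)$. This bijection is simply a cyclic relabeling of the three parts, so if one subhypergraph satisfies the conditions of \Cref{prop:extremalprob} --- which are symmetric in the three parts --- then so does the other, and hence $(B,C,A)$ is equilateral trapezoid-free whenever $(A,B,C)$ is.

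If one prefers to verify this directly from the equational definition, fix $b' \in B$, $c' \in C$, $a' \in A$ to play the roles of the primed elements for $(B,C,A)$. Each of the three resulting systems --- $I = b'ca = bc'a$, $I = b'ca = bca'$, and $I = bc'a = bca'$, in unknowns $(b,c,a) \in B \times C \times A$ --- rewrites under the cyclic identity $xyz = I \iff zxy = I$, applied equation by equation, to one of the three systems of \Cref{def:trap} for $(A,B,C)$ (up to a harmless renaming of variables and reordering of the two equations). Hence each has at most one solution by hypothesis. There is no significant obstacle; the proof is purely a symmetry check, and the only content is that the equilateral trapezoid-free property was set up in a way that is invariant under cyclic permutations of the parts, as it must be for the hypergraph formulation of \Cref{prop:extremalprob} to make sense.
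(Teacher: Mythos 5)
Your proof is correct, and it is the argument the paper implicitly intends (the paper gives no proof, merely remarking that the claim is ``easily seen''). Both the hypergraph-symmetry argument and the explicit equational check via the cyclic identity $xyz = I \iff zxy = I$ are right; the latter verifies cleanly that each of the three systems for $(B,C,A)$ rewrites to one of the three systems for $(A,B,C)$.
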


By an application of \Cref{thm:quasiremoval} combined with the observation that near-extremal solutions to \Cref{prop:extremalprob} are highly ``regular", we have the following weak improvement to the trivial upper bound of $|G|^{3/2}$.

\begin{proposition}\label{prop:val2}
	For any group $G$, $\Val(G) \le o(|G|^{3/2})$.
\end{proposition}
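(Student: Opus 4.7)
The plan is to combine the degree inequality implicit in \Cref{prop:extremalprob} with \Cref{thm:quasiremoval}. Suppose for contradiction that $\Val(G) \ge c|G|^{3/2}$ for some fixed $c > 0$ along a sequence of groups with $|G| = n \to \infty$, witnessed by equilateral trapezoid-free triples $(A,B,C)$ having $m \ge cn^{3/2}$ triangles. Extracting from the proof of \Cref{prop:extremalprob} the bound $\sum_{a \in A} d_a^2 \le n^2$ (where $d_a$ counts triangles through $a$), and analogously for $B,C$, Cauchy--Schwarz applied to $m = \sum_a d_a$ already yields $|A|,|B|,|C| \ge c^2 n$, so none of the three parts is individually small.

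Next I would perform a degree-based cleanup, discarding from $A$ every vertex whose triangle-degree lies outside a window $[\alpha\sqrt n,\, K\sqrt n]$, with $\alpha$ small and $K$ large in terms of $c$, and similarly for $B,C$. The low-degree tail contributes at most $\alpha n^{3/2}$ triangles (using $|A|\le n$), while the high-degree tail has size $\le n/K^2$ by the degree inequality and, by Cauchy--Schwarz again, contributes at most $n^{3/2}/K$ triangles. For $\alpha,K$ chosen suitably in terms of $c$, the induced subhypergraph on the cleaned parts $A'',B'',C''$ retains $\Theta(n^{3/2})$ triangles, and re-running the opening Cauchy--Schwarz on it forces $|A''|,|B''|,|C''| = \Theta(n)$.

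Finally I would verify the near-regularity hypothesis of \Cref{thm:quasiremoval}: every vertex in the cleaned hypergraph has triangle-degree at most $K\sqrt n$, so deleting $\varepsilon n$ vertices from any single part destroys at most $K\varepsilon n^{3/2}$ triangles. For $\varepsilon$ a sufficiently small positive constant depending only on $c$, any triple of subsets of density $1-\varepsilon$ still contains a triangle. Since the triangle count is $\Theta(n^{3/2}) = o(n^2)$, \Cref{thm:quasiremoval} concludes $|A''|,|B''|,|C''| = o(n)$, contradicting the previous paragraph. The main obstacle is only the bookkeeping involved in choosing $\alpha, K, \varepsilon$ so that each cleanup step loses only a small constant fraction of triangles; structurally, the argument just formalizes the intuition that any near-extremal instance of \Cref{prop:extremalprob} must have triangle-degrees concentrated around $\sqrt n$.
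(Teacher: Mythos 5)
Your proof is correct, and it takes a genuinely different route from the paper. The paper first tensors the witnessing triple $(A_0,B_0,C_0)$ with its cyclic permutations inside $G^3$ so that all three parts have equal size; it then uses the Paley--Zygmund inequality to locate a constant fraction of vertices of degree $\Omega(\sqrt{N})$, and finally estimates how many triangles survive after deleting small sets from each part. You instead work directly with the original (possibly unequal) parts, read off from the Cauchy--Schwarz step $\sum d_a^2 \le |B||C| \le n^2$ that $|A|,|B|,|C| \ge c^2 n$, and then trim both tails of the degree sequence, at the cost of only a small constant fraction of the $\Theta(n^{3/2})$ triangles, to obtain a subhypergraph in which every vertex has degree at most $K\sqrt{n}$. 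This two-sided trimming is what makes the argument clean: bounded degree immediately gives that deleting any $\varepsilon n$ vertices per part destroys at most $3K\varepsilon n^{3/2}$ triangles, so for $\varepsilon$ small in terms of $c$ the remaining triangle count stays positive, contradicting the removal lemma (equivalently \Cref{thm:quasiremoval}), since $\Theta(n^{3/2}) = o(n^2)$. The net effect is the same near-regularity used by the paper, but extracted without the tensor-power symmetrization or the probabilistic second-moment argument; the only care needed, which you flag, is fixing $\alpha, K, \varepsilon$ in the right order as functions of $c$. One cosmetic remark: you conclude by invoking the "$|A''_i| = o(n)$" formulation of \Cref{thm:quasiremoval}, whereas what the argument really uses (as in the paper) is the underlying removal-lemma dichotomy -- $o(n)$ deletions suffice, yet you have shown $\Omega(n)$ deletions do not -- which is the same thing said slightly more directly.
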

\begin{proof}
	Suppose for contradiction that there exists $\varepsilon_0 > 0$ such that $\Val(G) > \varepsilon_0 |G|^{3/2}$, and let $A_0,B_0,C_0 \subseteq G$ witness $\Val(G) = \varepsilon_0 |G|^{3/2}$. Next consider the triple $(A,B,C) := (A_0 \times B_0 \times C_0, B_0 \times C_0 \times A_0, C_0 \times A_0 \times B_0)$, which is equilateral-trapezoid free inside of $H:=G^3$ by \Cref{prop:tensorval} and \Cref{prop:symval}, and witnesses $\Val(H) \ge \varepsilon |H|^{3/2}$ where $\varepsilon := \varepsilon_0^3$. Let $|H| = N$. Let $X$ be the tripartite hypergraph with parts $A,B,C$ and where there is a triangle between all triples $(a,b,c)$ where $abc=I$. Let $n:=|A| = |B| = |C|$. By \Cref{prop:extremalprob} we must have $n \ge \varepsilon^{2/3} N $. Note that the number of triangles in $X$ equals  $\varepsilon N^{3/2} \ge \varepsilon n^{3/2}$. In what follows, the degree of a vertex in $X$ refers to the number of triangles containing it.
	
	Let $Y$ be the random variable that is uniformly distributed over the multiset of vertex degrees from one part of $X$, say $A$. Then $\E[Y] \ge \varepsilon n^{1/2}$ and $\E[Y^2] \le n$ (this second inequality follows from the use of Cauchy--Schwarz in the proof of \Cref{prop:extremalprob}). By the Payley-Zygmund inequality, for any $\theta>0$, $\Pr(Y > \theta \cdot \varepsilon n^{1/2} ) \ge (1-\theta^2)\varepsilon^2$. Taking $\theta = 1/2$, we conclude that at least $p \cdot n := 3n\varepsilon^2/4$ vertices in $A$ have degree at least $\varepsilon n^{1/2} /2$. This holds for $B$ and $C$ as well.
	
	 Now let $S, T,$ and $U$ be any subsets of $A,B,C$ of size at least $n(1-p/\lambda)$; we'll pick $\lambda \in \N$ later. Then the number of triangles incident to any one of these sets, say $S$, is at least
	\[np(1-\lambda^{-1}) \cdot \varepsilon n^{1/2} /2 = (3/8) n^{3/2} \varepsilon^3 (1-\lambda^{-1}),\]
	and the number of triangles incident to $[n] \setminus T$ or $[n] \setminus U$, sets of size at most $np/\lambda$, is at most 
	\[(n^2 \cdot np/\lambda)^{1/2} = (3^{1/2}/2) n^{3/2} \varepsilon \lambda^{-1/2} \]
	by Cauchy--Schwarz. It follows that the number of triangles with one vertex in each of $S,T,U$ is at least
	\[(3/8) n^{3/2} \varepsilon^3 (1-\lambda^{-1}) - 2 \cdot (3^{1/2}/2) n^{3/2} \varepsilon \lambda^{-1/2} \]
	which is greater than 1 for $\lambda \gg \varepsilon^{-4}$. In summary, between any three subsets of $A,B,C$ size roughly $n(1-\varepsilon^6)$, there is a triangle.
	
	Recall that $n \ge \varepsilon^{2/3} N$. Since $X$ has at most $N^{3/2} \le o(N^2)$ triangles, by \Cref{thm:quasiremoval} we can remove $o(N) = o(n)$ vertices to remove all triangles. But by what we have just shown, after deleting this few vertices some triangle will remain, a contradiction.
\end{proof}

\begin{remark}
By combining this proof with \cite{fox2017tight}, it follows that for fixed $n$ and some $\varepsilon > 0$, $\Val(\Z_n^\ell) \le O(n^{3/2(1-\varepsilon)\ell})$.
\end{remark}

\section{$\Val(\Z_n)$ and its applications}\label{sec:zn}
Our weakest conjecture is the following.

\begin{conjecture}\label{conj:val}
	For all $\varepsilon > 0$, $\Val(\Z_n) \le O(n^{1+\varepsilon})$.
\end{conjecture}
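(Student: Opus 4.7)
The plan is to prove \Cref{conj:val} via a reduction to the skew corners conjecture (\Cref{conj:skew}), and then to attack the latter. Given an equilateral trapezoid-free triple $(A,B,C) \subseteq \Z_n$ realizing $\Val(\Z_n)$ solutions to $a+b+c=0$, I would consider the injection $\phi(a,b,c) = (-b, c)$ from the set of triangles into $\Z_n^2$, and let $S = \operatorname{im}(\phi)$, so $|S| = \Val(\Z_n)$. The key claim is that $S$ is skew corner-free in $\Z_n^2$. If $(x,y), (x, y+\delta), (x+\delta, y') \in S$ with $\delta \neq 0$, then unwinding the preimages gives triangles $(x-y,-x,y)$, $(x-y-\delta,-x,y+\delta)$, and $(x+\delta-y',-x-\delta,y')$ in $A \times B \times C$, and a short calculation shows that both $a_1 := x-y$ and $a_2 := x-y-\delta$ are common neighbors in $A$ of the pair $(-x, y+\delta) \in B \times C$; explicitly, $x - a_i \in C$ and $-a_i - (y+\delta) \in B$ for $i=1,2$, where the third triangle supplies the needed element $-x-\delta \in B$ for $i=1$. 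Since $a_1 \neq a_2$, this contradicts the third codegree condition in \Cref{def:trap}. Standard tricks relate skew corner-freeness in $\Z_n^2$ and $[n]^2$ at only constant cost, so \Cref{conj:skew} implies \Cref{conj:val}.

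Establishing \Cref{conj:skew} is the second and much harder step. The skew condition is remarkably rigid: if column $x$ carries $k \ge 2$ points, then for each of the $k-1$ distinct nonzero vertical differences $\delta$ within that column, the entire column $x+\delta$ must be empty. I would try a dyadic decomposition of the column-population histogram, setting $T_j := \{x : 2^j \le c_x < 2^{j+1}\}$ and arguing that columns in $T_j$ with $j$ large force enough empty columns to bound $|T_j|$ tightly, so that $|S| \le \sum_j 2^{j+1}|T_j| \le O(n^{1+\varepsilon})$. The technical heart is controlling overlaps between empty columns forced by different rich columns --- a naive summation double-counts and blows up the bound. One could attack this via Fourier analysis on the column-difference sets, or via a density-increment strategy on a suitable projection of $S$ onto $[n]$.

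The central difficulty is that a bound of $n^{1+\varepsilon}$ is qualitatively beyond what is known even for the classical corners problem, where after decades only the $o(n^2)$-type bound of \cite{shkredov2006generalization} is available. A direct implementation of the dyadic heuristic above seems to give at best a power-savings improvement over $n^{3/2}$, comparable to what already follows from \Cref{prop:val2}. Bridging the remaining gap likely requires a genuine structural dichotomy --- roughly, that every skew corner-free set is either supported near a bounded number of arithmetic-progression-like lines (matching Petrov's construction) or obeys a strong Fourier-pseudorandomness property amenable to direct analysis. The fact that even the much weaker bound $O(n^{4/3-\varepsilon})$ would already refute \cite[Conjecture 4.7]{cohn2005group} via \Cref{thm:main2} is a concrete indicator that nontrivial progress here encodes a real obstruction in our understanding of matrix multiplication exponents, rather than a mere technical gap.
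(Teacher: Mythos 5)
The statement you were asked to prove is labeled a \emph{conjecture} in the paper (\Cref{conj:val}), and the paper provides no proof of it --- it is left open. So there is no ``paper's proof'' to compare against, and your proposal could not have been expected to close the argument.

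That said, the first step of your proposal --- the reduction of \Cref{conj:val} to the skew corners conjecture (\Cref{conj:skew}) --- is correct and is in substance what the paper does, though your route is more direct. The paper passes through a chain of relaxations: $\Val(\Z_n) = \Theta(\Val(n))$ by \Cref{prop:resasonable}, then $\Val(n) \le \Val(\triforce{0.4cm},n)$, then $\Val(\triforce{0.4cm},n)$ is bounded by the largest skew corner-free subset of the triangular grid $\Delta_{n+1}$ by \Cref{prop:tridel}, which is bounded by the largest skew corner-free subset of $[n]^2$ by \Cref{prop:skewdel}. You instead send each triangle $(a,b,c)$, $a+b+c=0$, to $(-b,c) \in \Z_n^2$ and check directly that the image is skew corner-free: unwinding the preimages of a putative skew corner $(x,y),(x,y+\delta),(x+\delta,y')$ shows that $a_1 = x-y$ and $a_2 = x-y-\delta$ are both common neighbors in $A$ of $b'=-x \in B$ and $c'=y+\delta \in C$ (the third triangle supplying $-x-\delta \in B$), violating the third codegree bound in \Cref{def:trap}. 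This is a valid and somewhat cleaner derivation of the same implication, working in $\Z_n^2$ rather than $\Delta_n$; the minor gain is that it sidesteps the triforce-free intermediate and the projection in \Cref{prop:skewdel}.

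The gap, of course, is the second step. You reduce to \Cref{conj:skew}, but \Cref{conj:skew} is itself an open conjecture stated in this very paper, with the current best upper bound being only $O(n^2/(\log\log n)^c)$ inherited from Shkredov's corners bound. Your dyadic column-histogram idea and density-increment speculation are reasonable starting points, but as you yourself note they appear to give at best a power-savings improvement over $n^{3/2}$ --- no better than what \Cref{prop:val2} already yields by removal-lemma methods --- and nothing resembling $n^{1+\varepsilon}$. So what you have is a (correct and clean) reduction plus an acknowledged open problem, not a proof; and since the paper itself treats this as a conjecture and derives \Cref{thm:main1,thm:main2} as conditional consequences, that is the expected state of affairs.
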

In this section we give our potential applications of this conjecture. We then introduce several related quantities and make preliminary progress on understanding them. 

While the quantity $\Val(\Z_n)$ may seem opaque from \Cref{def:trap}, it can easily be visualized. This is done by first considering the natural notion of an equilateral trapezoid-free subset of the plane, which is convenient to introduce sooner rather than later. Throughout this section, we let $\Delta_{n+1} = \{(a,b,c) \in \Z_{\ge 0}^3: a+b+c=n\}$. A subset of $\Delta_{n+1}$ is said to be corner-free if it contains no configuration $(x+\delta, y, z), (x,y+\delta,z), (x,y,z+\delta)$.

\begin{definition}\label{def:trapplane}
	Let $A, B, C \subseteq \{0, \ldots, n\}$. We call $(A,B,C)$ an equilateral trapezoid-free triple if for any fixed $a', b', c'$, the following systems of equations in the variables $a \in A, b \in B, c \in C$ each have at most one solution:
	\begin{align*}
		n&=a'+b+c=a+b'+c\\
		n&=a'+b+c=a+b+c'\\
		n&=a+b'+c=a+b+c'.
	\end{align*}
	Let $\Val(n)$ be the maximum number of solutions to $a+b+c=n$ over all equilateral trapezoid-free triples $(A,B,C)$. 
\end{definition}

We may visualize equilateral trapezoid-free sets as follows. Draw $\Delta_{n+1}$ in the plane as a triangular grid of points. Sets $A,B,C$ correspond to collections of lines parallel to the sides of $\Delta_{n+1}$, and a solution $a+b+c=n$ corresponds a point in $\Delta_{n+1}$ contained in one line in each of these three directions. Let $S \subseteq \Delta_{n+1}$ be the collection of all such points. A violation of a constraint of \Cref{def:trapplane} corresponds to either a subset of 3 points in $S$ forming an equilateral triangle with sides parallel to the sides of $\Delta_{n+1}$, or a subset of 4 points with sides parallel to the sides of $\Delta_{n+1}$ forming an equilateral trapezoid. Equivalently, we are deleting lines parallel to the sides of $\Delta_{n+1}$ to eliminate all of such configurations, while leaving as many points as possible. The maximum possible number of points left equals $\Val(n)$. See \Cref{fig:traps}.

\begin{figure}
	\begin{center}
		\includegraphics[scale=0.5]{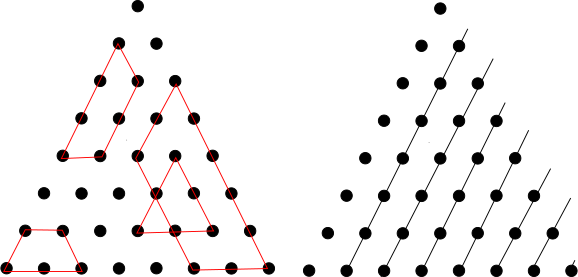}
		\caption{Left: some forbidden trapezoids and triangles in $\Delta_8$. Right: a trapezoid-free subset of $\Delta_8$ of size $8$ obtained by deleting all lines but one along one direction.}\label{fig:traps}
	\end{center}
\end{figure}

The following shows that $\Val(n)$ and $\Val(\Z_n)$ are essentially the same.
\begin{proposition}\label{prop:resasonable}
	\begin{enumerate}
		\item $\Val(n) \ge \Val(n-1)$.
		\item $1+2 \cdot \Val(2n) \ge \Val(\Z_n) \ge \Val(\lfloor n/3 \rfloor)$.
		\item For $n \ge 6n'$, $\Val(\Z_n) \ge \Val(\Z_n')/2-1$
	\end{enumerate}

\end{proposition}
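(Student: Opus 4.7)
The plan is a uniform translation argument: in each part I move between integer equations $a+b+c = t$ and modular equations $a+b+c \equiv 0 \pmod m$ by translating one of the three sets by a carefully chosen constant. Whenever the admissible integer range for $a+b+c$ is small enough that each residue class mod $m$ contains at most one representative in it, the triangle count and all three trapezoid constraints transport bijectively between the two settings. This observation does essentially all the work.

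For Part 1, I take a triple $(A,B,C)$ in $\{0,\dots,n-1\}$ witnessing $\Val(n-1)$ and set $A' := A+1 \subseteq \{0,\dots,n\}$. Solutions to $a'+b+c = n$ in $A' \times B \times C$ are in bijection with solutions of $a+b+c = n-1$ in $A \times B \times C$, and the substitution $a' = a+1$ turns each of the three trapezoid constraints for parameter $n$ into the corresponding constraint for parameter $n-1$.

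For the lower bound in Part 2, with $n' := \lfloor n/3 \rfloor$ and $(A, B, C)$ realizing $\Val(n')$, I replace $A$ by $A + (n-n') \pmod n$ inside $\Z_n$. A modular sum $\equiv 0 \pmod n$ now corresponds to $a+b+c \equiv n' \pmod n$; since $a+b+c \in \{0,\dots,3n'\}$ and $3n' \le n$, the residue class of $n'$ is hit only by $a+b+c = n'$ itself, giving $\Val(n')$ triangles and preserving all three trapezoid constraints. For the upper bound, starting from a triple $(A, B, C) \subseteq \{0,\dots,n-1\}^3$ realizing $\Val(\Z_n)$, I partition its triangles by the integer value of $a+b+c \in \{0, n, 2n\}$, obtaining counts $N_0, N_1, N_2$ with $N_0 \le 1$. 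The $\Z_n$ trapezoid property restricts to integer trapezoid-freeness of $(A, B, C)$ viewed inside $\{0, \dots, n\}$ with parameter $n$ (whence $N_1 \le \Val(n)$) and inside $\{0, \dots, 2n\}$ with parameter $2n$ (whence $N_2 \le \Val(2n)$); combining with the monotonicity $\Val(n) \le \Val(2n)$ from Part 1 yields $\Val(\Z_n) \le 1 + 2\Val(2n)$.

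Part 3 is the only place where the hypothesis $n \ge 6n'$ actually bites. Given $(A, B, C) \subseteq \{0,\dots,n'-1\}^3$ realizing $\Val(\Z_{n'})$, I split triangles by $a+b+c \in \{0, n', 2n'\}$, obtaining $N_0 + N_1 + N_2 = \Val(\Z_{n'})$ with $N_0 \le 1$, so $\max(N_1, N_2) \ge (\Val(\Z_{n'})-1)/2$. Assume WLOG $N_1 \ge N_2$, and set $C^* := C + (n - n') \subseteq \Z_n$. Then $a + b + c^* \equiv 0 \pmod n$ is equivalent to $a + b + c \equiv n' \pmod n$; since $n \ge 6n'$, the range $a+b+c-n' \in \{-n',\dots,2n'-3\}$ contains only the multiple $0$ of $n$, so $a+b+c = n'$ in $\Z$. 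This produces $N_1 \ge \Val(\Z_{n'})/2 - 1$ triangles, and the same unique-representative argument converts each of the three $\Z_n$ trapezoid constraints on $(A, B, C^*)$ back into an integer constraint on $(A, B, C)$ with parameter $n'$, which is implied by $\Z_{n'}$ trapezoid-freeness. If instead $N_2 > N_1$, the identical argument with $C^* := C + (n - 2n')$ works, using $n \ge 6n'$ to rule out the representative $a+b+c = 2n'+n$. The only mildly tedious step — but not a deep one — is verifying that all three constraints survive each translation, not just the triangle count; once the unique-representative inequality is fixed this is formal.
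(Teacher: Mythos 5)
Your proof is correct and follows essentially the same translation strategy as the paper: shift one of the three sets so that the integer sum lands in a window where exactly one representative of each residue class survives, and observe that both the triangle count and all three two-equation constraints transport across this identification. Two minor differences worth noting: for the lower bound $\Val(\Z_n)\ge\Val(\lfloor n/3\rfloor)$ the paper's written proof literally takes $N=3n$ and hence implicitly assumes $3\mid n$; your shift by $n-\lfloor n/3\rfloor$ handles general $n$ cleanly. For part (3), the paper simply observes it follows formally from (1) and (2) (via $\Val(\Z_n)\ge\Val(\lfloor n/3\rfloor)\ge\Val(2n')\ge(\Val(\Z_{n'})-1)/2$), whereas you rerun the partition-and-shift argument directly; both are fine, and the paper's derivation is a bit shorter, but your direct argument is equally valid and makes the role of $n\ge 6n'$ more transparent.
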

\begin{proof}
	Suppose that $\Val(n)$ is witnessed by sets $A,B,C$. For $N > n$, $A+(N-n), B, C$ then witness $\Val(N) \ge \Val(n)$, which shows (1). If we take $N = 3n$, we have that $A+2n \subseteq \{0,\ldots, N\}$ and $B, C \subseteq \{0,\ldots, N/3\}$, so $a+b+c \le 5N/3 < 2N$. Since $a+2n + b+c = 0 \bmod N \iff a+2n+b+c =N$, this implies that the sets $A+2n, B, C$ are equilateral trapezoid-free when viewed as subsets of $\Z_N$. This shows one direction of (2). In the other direction, suppose $\Val(\Z_n)$ is witnessed by $A,B,C \bmod n$. There are at least $(\Val(\Z_n)-1)/2$ solutions to one of $a+b+c=n, a+b+c=2n$; let $N$ be the right-hand side of the most frequently satisfied equation. Since every solution to $a+b+c=N$ is a solution to $a+b+c = 0 \bmod n$, $A,B,C$ must be equilateral trapezoid-free when viewed as subsets of $\{0, \ldots, N\}$. This shows the other direction of (2).
	
	Finally, (3) follows from (1) and (2).
\end{proof}

\begin{theorem}\label{thm:polyval}
	Suppose that one can achieve $\omega = 2$ via STPP constructions in the family of groups $\Z_q^\ell$, $q$ a prime power. Then there exists a constant $c>0$ such that $\Val(\Z_n) \ge \Omega(n^{1+c}).$
\end{theorem}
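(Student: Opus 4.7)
The plan is to use \Cref{prop:stppval} to convert a putative STPP achieving $\omega=2$ in $\Z_q^\ell$ into a polynomial excess in $\Val(\Z_q^\ell)$ over the trivial bound $|G|$, exploiting the power-savings in \Cref{cor:removaltight}, and then to transfer this excess to a lower bound on $\Val(\Z_n)$ via a base-$(3q)$ embedding $\Z_q^\ell\hookrightarrow\Z$ together with \Cref{prop:resasonable}.

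For the first half, fix $\varepsilon>0$ and an STPP in $\Z_q^\ell$ achieving $\omega$-bound $2+\varepsilon$. Restricting to the uniform case (the general case is analogous, with \Cref{prop:stppbound} in place of \Cref{eqn:stppbd}), we have $k$ copies of $M_{n,n,n}$ with $kn^{2+\varepsilon}=|G|=q^\ell$. \Cref{cor:removaltight} yields $kn^2\le |G|/C^\ell$, so $n\ge C^{\ell/\varepsilon}$; combined with the trivial bound $n\le |G|^{1/(2+\varepsilon)}$ this forces $q\ge C^{(2+\varepsilon)/\varepsilon}$. By \Cref{prop:stppval}, $\Val(\Z_q^\ell)\ge kn^3=|G|\cdot n^{1-\varepsilon}$. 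Taking a sequence of constructions in which $q$ attains its minimum $C^{(2+\varepsilon)/\varepsilon}$ (see the remark on the obstacle below), this becomes $\Val(\Z_q^\ell)\ge |G|^{1+(1-\varepsilon)/(2+\varepsilon)}$, which exceeds $|G|^{1+c_0}$ for any fixed $c_0<1/2$ once $\varepsilon$ is small enough.

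To pass from $\Z_q^\ell$ to the cyclic setting, define $\psi\colon\Z_q^\ell\hookrightarrow\Z$ by $\psi(x_0,\dots,x_{\ell-1})=\sum_{i=0}^{\ell-1}x_i(3q)^i$ with each $x_i\in\{0,\dots,q-1\}$. Because three digits in $\{0,\dots,q-1\}$ sum to at most $3(q-1)<3q$, the base-$(3q)$ representation of $\psi(a)+\psi(b)+\psi(c)$ has digits $a_i+b_i+c_i$; in particular, if $a+b+c=0$ in $\Z_q^\ell$ then the $\Z$-sum equals $\sum_i k_iq(3q)^i$ with $k_i\in\{0,1,2\}$, taking one of at most $3^\ell$ values. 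If $(A,B,C)$ is equilateral trapezoid-free in $\Z_q^\ell$ and witnesses $T:=\Val(\Z_q^\ell)$, then $(\psi(A),\psi(B),\psi(C))$ is equilateral trapezoid-free in the plane sense of \Cref{def:trapplane} for every one of these targets (any $\Z$-sum identity reduces mod $q$ coordinate-wise to the corresponding $\Z_q^\ell$-identity). By pigeonhole, some target $N_0\le(3q)^\ell$ receives $\ge T/3^\ell$ solutions, so $\Val(N_0)\ge T/3^\ell$. \Cref{prop:resasonable}(2) gives $\Val(\Z_{3N_0})\ge\Val(N_0)$, and plugging in $T\ge q^{\ell(1+c_0)}$ and $3N_0\le 3(3q)^\ell$, a short computation (using that $q\to\infty$ as $\varepsilon\to0$ makes $q^{\ell c_0}$ dominate the $3^\ell$-losses) yields $\Val(\Z_m)\ge m^{1+c}$ for $m=3N_0$ and any fixed $c<c_0$, for all sufficiently small $\varepsilon$. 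This produces an infinite sequence of $m$ with the desired bound, from which $\Val(\Z_n)\ge\Omega(n^{1+c})$ follows (directly for the ``infinitely many $n$'' reading, or via \Cref{prop:resasonable}(3) for the ``all large $n$'' reading).

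The principal technical hurdle is the ``choose $q$ at the minimum'' step: a priori the constructions achieving $\omega=2$ might use $q$ far above $C^{(2+\varepsilon)/\varepsilon}$, in which case the excess $C^{\ell(1-\varepsilon)/\varepsilon}$ is swallowed by $q^\ell$ and the exponent $c$ degenerates to $0$. I expect this to be resolvable using the flexibility of the STPP framework --- if a construction works in an inefficient $\Z_q^\ell$, one should be able to derive a more efficient construction in a smaller $\Z_{q'}^{\ell'}$, e.g.\ by tensor factorization or subgroup restriction --- but this step deserves care. A secondary task is extending from the uniform case to the general non-uniform STPP; here one replaces \Cref{eqn:stppbd} by \Cref{prop:stppbound} and combines \Cref{cor:removaltight} with the Hölder bound $\sum V_i^{2/3}\le(\prod_\sigma\sum|S_{\sigma(1),i}||S_{\sigma(2),i}|)^{1/3}\le|G|/C^{\ell/3}$ to extract $V_{\max}\ge C^{\ell/\varepsilon}$, giving the same conclusion.
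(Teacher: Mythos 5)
Your derivation in $\Z_q^\ell$ is essentially correct and matches the paper's: via \Cref{cor:removaltight}, H\"older (or the uniform simplification), and \Cref{prop:stppval}, one gets $\Val(\Z_q^\ell)\ge q^\ell D^\ell$ with $D=C^{(1-\varepsilon)/\varepsilon}$ going to infinity as $\varepsilon\to 0$. The cyclic transfer you sketch (digit embedding, pigeonhole over $3^\ell$ targets, \Cref{prop:resasonable}) is also workable, just slightly lossier than the paper's version: the paper pushes the \emph{STPP itself} through $\varphi\colon\Z_q^N\to\Z_{(3q)^N}$, which preserves the full value $\sum|S_i||T_i||U_i|$ with no $3^\ell$ loss, so it only needs $D>3$ rather than the $D>9$ your pigeonhole forces. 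Both thresholds are attainable, so this difference is cosmetic.

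The genuine gap is the one you flagged, but your proposed remedy points in the wrong direction. You observe that as $\varepsilon\to 0$ the construction's $q$ might blow up, making $D^\ell$ negligible relative to $q^\ell$ (hence $c\to 0$), and you hope to fix this by somehow compressing the STPP into a smaller $\Z_{q'}^{\ell'}$. There is no known way to do that, and the paper doesn't attempt it. The actual resolution is the \emph{powering step}, which is absent from your proposal: once a single STPP with $\omega$-bound close enough to $2$ is in hand --- living in some fixed $\Z_{q_0}^{\ell_0}$ and giving $\Val(\Z_{q_0}^{\ell_0})>(4q_0)^{\ell_0}$ --- take $k$-fold tensor products (\cite[Lemma 5.4]{cohn2005group}) to get $\Val(\Z_{q_0}^{k\ell_0})>(4q_0)^{k\ell_0}$ for all $k$, with $q_0$ now \emph{fixed}. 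Only after this does one embed into $\Z_{(3q_0)^{k\ell_0}}$, obtaining $\Val(\Z_{m_k})>m_k^{1+c}$ for the whole sequence $m_k=(3q_0)^{k\ell_0}$, with $c=\log(4q_0)/\log(3q_0)-1>0$ depending only on the fixed $q_0$; \Cref{prop:resasonable}(3) then extends this to all large $n$. Without powering, you either get a single value of $m$ (insufficient for an $\Omega$ bound), or you are forced to let $\varepsilon\to 0$ across a sequence of constructions whose $q$ you cannot control --- which is exactly the degeneration you worried about. So the obstacle is real, but it is solved by taking powers of one good STPP, not by ``choosing $q$ at the minimum.''
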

\begin{proof}
By \Cref{cor:packingbarrier} and \Cref{cor:removaltight}, any STPP construction with sets $X_i, Y_i, Z_i$ satisfies $\sum |X_i||Y_i| \le (q/C)^\ell$ (we choose the $X$ and $Y$ sets without loss of generality) where $C$ is an absolute constant. By H\"older's inequality, $\sum (|X_i||Y_i||Z_i|)^{2/3} \le q^{2\ell/3}(q/C)^{\ell/3} = (q/C^{1/3})^\ell$. 
If we can obtain $\omega < 3 - \alpha$ via \Cref{prop:stppbound}, then
\begin{align*}
	q^\ell < \sum (|X_i||Y_i||Z_i|)^{2/3 \cdot \alpha + (1-\alpha)} &=	\sum (|X_i||Y_i||Z_i|)^{2/3 \cdot \alpha} (|X_i||Y_i||Z_i|)^{1-\alpha}\\
	&\le (\sum (|X_i||Y_i||Z_i|)^{2/3})^\alpha (\sum |X_i||Y_i||Z_i|)^{1-\alpha}\\
	&\le  (q/C^{1/3})^{\alpha \ell} \Val(G)^{1-\alpha}
\end{align*}
so $\Val(G) > q^\ell (C^{\alpha/3(1-\alpha)})^\ell$. By choosing $\alpha$ sufficiently close to 1, $\Val(G) > q^\ell 4^\ell$. By taking $k$-fold products of the sets defining the STPP constructions (using that products of STPPs are STPPs \cite[Lemma 5.4]{cohn2005group}), we find that $\Val(\Z_q^{k\ell}) > (4q)^{k\ell}$ for all $k$.

Let $N = k\ell$. Consider the embedding $\varphi : \Z_q^N \to \Z_{(3q)^N}$ defined by $\varphi(x_1, \ldots, x_N) = x_1 + x_2  3q + \cdots + x_n (3q)^{N-1}$. Since $\sum y_i (3q)^{i-1}$ has a unique such expression in $\Z_{(3q)^N}$ when $y_i < 3q$, it follows that that
\[a_1+a_2+a_3 \neq a_4+a_5+a_6 \implies \varphi(a_1) + \varphi(a_2) + \varphi(a_3) \neq \varphi(a_4) + \varphi(a_5) + \varphi(a_6).\]
Hence the image of an STPP under $\varphi$ is an STPP inside of $\Z_{(3q)^N}$, so $\Val( \Z_{(3q)^N}) > (4q)^N$. Because this holds for some particular $q$ and all $N = k\ell$, by part (3) of \Cref{prop:resasonable} the theorem follows.
\end{proof}

\begin{corollary}
Suppose that there is a family of STPP constructions obtaining $\omega = 2$ in a family of abelian groups with a bounded number of direct factors. Then there exists a constant $c > 0$ such that $\Val(\Z_n) \ge \Omega(n^{1+c})$.
\end{corollary}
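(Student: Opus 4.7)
The plan is to reduce this to \Cref{thm:polyval} by embedding the bounded-rank STPPs into a family of the form $\Z_q^\ell$ with $q$ a prime power and $\ell \to \infty$. Let $\{G_m\}$ be the hypothesized family of abelian groups of rank at most some fixed $K$, with STPPs achieving $\omega \to 2$. By the structure theorem and passage to a subsequence, I may assume each $G_m = \Z_{n_{m,1}} \times \cdots \times \Z_{n_{m,K}}$ with $n_{m,1}\mid\cdots\mid n_{m,K} = M_m$ and $M_m\to\infty$ (which must hold since $|G_m|\to\infty$ and the rank is bounded). The natural inclusion $G_m \hookrightarrow \Z_{M_m}^K$ turns any STPP in $G_m$ into an STPP in $\Z_{M_m}^K$ with the same set sizes.

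To match the hypothesis of \Cref{thm:polyval}, which requires the exponent of the ambient group in the family $\Z_q^\ell$ to be prime and the rank to grow, I perform two further reductions. First, I use the $p$-primary decomposition $G_m = \bigoplus_p (G_m)_p$ to restrict to a sub-family consisting of $p_m$-groups, so that the exponent $q_m$ is a prime power; the factorization works because both the packing sums $\sum |X_i||Y_i|$ and the STPP bound $\sum s_i^{\omega/3} \le |G_m|$ from \Cref{prop:stppbound} decompose multiplicatively across primary components, allowing a pigeonhole argument to identify a single prime carrying the STPP's efficiency. Second, I take $\ell$-fold tensor powers of the STPP, producing STPPs in $\Z_{q_m}^{K\ell}$ with the same $\omega$-bound (using that products of STPPs are STPPs, \cite[Lemma~5.4]{cohn2005group}). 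Letting $m,\ell\to\infty$ in tandem yields a family in $\Z_q^{\ell'}$ with $q$ ranging over prime powers and $\ell' = K\ell \to\infty$, still achieving $\omega\to 2$. \Cref{thm:polyval} then immediately delivers $\Val(\Z_n)\ge\Omega(n^{1+c})$ for some $c>0$.

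The main obstacle is controlling the loss in the $\omega$-bound when embedding $G_m$ into $\Z_{M_m}^K$: the bound $\sum s_i^{\omega/3}\le |G_m|$ becomes the weaker $\le M_m^K$, and the ratio $M_m^K/|G_m| = \prod_i (M_m/n_{m,i})$ can be as large as $M_m^{K-1}$, potentially pushing the attainable $\omega$ substantially above $2$. This loss is only $o(1)$ provided $M_m^K/|G_m|$ is polynomially bounded in the typical triple size arising from the STPP (which is forced to grow with $|G_m|$ for any construction approaching $\omega=2$). A related subtlety is that projection onto a single primary component does not automatically preserve the STPP's disjointness condition in the second clause of the definition, so the $p$-primary pigeonhole step must be carried out using the product structure of $G_m$ rather than a naive restriction. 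Threading these two reductions together carefully, while preserving both the STPP property and the approach to $\omega=2$, is the technical heart of the argument.
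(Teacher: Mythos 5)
Your proposal follows the same high-level reduction as the paper --- embed the bounded-factor abelian groups into a family of the form $\Z_q^\ell$ and then invoke \Cref{thm:polyval} --- but routes through an extra $p$-primary decomposition step that the paper avoids entirely. The paper's argument is shorter: take $p$ to be the smallest prime greater than $3\max_i m_i$ and map $G = \Z_{m_1}\times\cdots\times\Z_{m_\ell}$ (with $\ell$ fixed) into $\Z_p^\ell$ via base-$p$ digits; because $p > 3m_i$ for every $i$, three-fold sums never wrap around, so the STPP survives, and since $p$ is already a prime there is nothing to pigeonhole and no need to split by primary components at all. Your version, by contrast, first embeds into $\Z_{M_m}^K$ via the invariant-factor decomposition, then attempts to isolate a single prime's contribution by a pigeonhole over primary components, and only afterward takes tensor powers. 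As you yourself flag, it is not clear that restricting to one primary component preserves the STPP's disjointness clause, and you leave that step as a sketch (``carried out using the product structure of $G_m$ rather than a naive restriction'') rather than an argument. That is the central gap in your proposal: you have replaced a one-step embedding with a two-step reduction whose second step is both unverified and, comparing with the paper, unnecessary.

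Your concern about the size inflation $M_m^K/|G_m|$ (or, in the paper's notation, $p^\ell/|G|$) is, however, a legitimate one and deserves more attention than either write-up gives it. If, for instance, $G = \Z_2 \times \Z_m$ with $m$ odd, then $p$ is roughly $3m$ and $p^2/|G|$ is of order $m$, not $O(1)$, so the paper's claim that $p^k\le O(|G|)$ via Bertrand's postulate implicitly assumes the $m_i$ are all within a constant factor of $\max_i m_i$. You spot this issue, but asserting that the loss is ``$o(1)$ provided $M_m^K/|G_m|$ is polynomially bounded in the typical triple size'' merely restates what needs to be shown; no argument is given for why that proviso would hold for an arbitrary family achieving $\omega=2$. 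So on this point you and the paper are in the same place, and your added complexity does not buy anything. To turn your outline into a proof you would need to (i) either remove the primary-decomposition step or rigorously show it preserves the STPP, and (ii) actually bound the inflation $|\Z_q^\ell|/|G_m|$ in a way that keeps the $\omega$-bound from \Cref{prop:stppbound} near $2$.
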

\begin{proof}
Suppose we have a family of STPP construction in groups of the form $G = \Z_{m_1}\times \cdots \times \Z_{m_\ell}$, with $\ell$ fixed. We can then obtain an STPP construction in $\Z_p^\ell$, where $p$ is the smallest prime greater than $\max_{i \in k} 3m_i$, by taking the image of this STPP under the map sending $(x_1, \ldots, x_k) \to x_1+x_2p+ \cdots + x_kp^{k-1}$. As $k$ is fixed, it follows from Bertrand's postulate that $p^k \le O(|G|)$. The inequality \Cref{prop:stppbound} then implies that one can also obtain $\omega = 2$ in the family of groups $\Z_p^\ell$, so we conclude by \Cref{thm:polyval}.
\end{proof}
\begin{remark}
Although we expect that \Cref{thm:polyval} is true when the hypothesis is extended to arbitrary abelian groups, we do not know how to generalize to e.g.~$\Z_n^\ell$ for arbitrary $n$. This is due to the fact that better bounds on the size of 3-matchings in cyclic groups with prime power modului are known than for general moduli (compare Theorems A and A' in \cite{blasiak2017cap}). To the best of our knowledge, it is an open problem whether the known bounds for non-prime power moduli are tight. For prime power moduli, the known bounds are tight by \cite{kleinberg2016growth}.
\end{remark}
Next we show that sufficiently strong simultaneous double product property constructions, which are known to prove $\omega < 2.48$ \cite[Proposition 4.5]{cohn2005group}, imply strong lower bounds on $\Val(\Z_n)$. We thank Chris Umans for informing us of the fact that if \Cref{conj:twofamilies} is true, then it is true in cyclic groups, which motivated the following theorem.

\begin{theorem}\label{thm:twofam43}
If \Cref{conj:twofamilies} is true, then for any $\varepsilon > 0$, $\Val(\Z_n) \ge O(n^{4/3-\varepsilon})$.
\end{theorem}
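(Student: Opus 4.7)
The plan is to follow the strategy of \Cref{thm:polyval}. By Umans's observation (noted just before the theorem statement) that \Cref{conj:twofamilies} in abelian groups implies its cyclic analogue, we may assume the $n$ SDPP pairs $(A_i,B_i)$ live in $G=\Z_M$ with $M=n^{2-o(1)}$ and each $|A_i||B_i|=n^{2-o(1)}$. The index-padded construction $S_i = A_i \times \{i\}$, $T_i = B_i \times \{0\}$, $U_i = \{0\} \times \{-i\}$ yields an STPP in $H = \Z_M \times \Z_n$, which is cyclic of order $Mn = n^{3-o(1)}$ by CRT (choosing $M$ coprime to $n$); by \Cref{prop:stppval} this already gives $\Val(\Z_{Mn}) \ge \sum_i |A_i||B_i| = n^{3-o(1)}$. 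Setting $N := Mn$, this is only the trivial $\Val(\Z_N) \ge N^{1-o(1)}$, and falls well short of the target.

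To reach the exponent $4/3$, I would combine the STPP inequality $\sum_i (|S_i||T_i||U_i|)^{\omega/3}\le|H|$ (which in the abelian setting is the content of \Cref{prop:stppbound}) with \Cref{conj:twofamilies}'s implication that $\omega=2$, then apply H\"older in the manner of \Cref{thm:polyval}. Writing $x_i=|A_i||B_i|\cdot|U_i|=|A_i||B_i|$, the power-mean inequality
\[ \sum_i x_i \;\ge\; n^{-1/2}\left(\sum_i x_i^{2/3}\right)^{3/2} \]
combined with a saturation $\sum_i x_i^{2/3}\ge |H|^{1-o(1)}$ at $\omega=2$ would give $\sum_i x_i \ge n^{-1/2}|H|^{3/2-o(1)}$. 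Plugging in $n=n$ and $|H|=n^{3-o(1)}$ yields $\sum_i x_i \ge n^{4-o(1)} = N^{4/3-o(1)}$, so $\Val(\Z_N)\ge N^{4/3-o(1)}$. Tensoring the SDPP with itself $k$ times via \Cref{prop:tensorval} produces the same estimate along the family of cyclic groups of orders $N_k = (Mn)^k$, and letting $k\to\infty$ converts the $o(1)$ loss into an arbitrary $\varepsilon$ in the exponent.

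The main obstacle is that the derived STPP does not by itself saturate the STPP inequality at $\omega=2$: direct computation gives $\sum_i(|A_i||B_i|)^{2/3}=n\cdot n^{4/3}=n^{7/3}\ll n^3=|H|$, so the STPP inequality alone only forces the trivial $\omega\le 3$. The required saturation must instead come from the bundled rectangular-matrix-multiplication interpretation of SDPP in the group algebra $\C[G]$---the mechanism through which \Cref{conj:twofamilies} is conjectured to imply $\omega=2$ in the first place, treating the family $\{(A_i,B_i)\}$ as jointly realizing a single $\langle n,n,n\rangle$-type tensor rather than $n$ independent copies of $\langle |A_i|,|B_i|,1\rangle$. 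Adapting the H\"older step to exploit this bundling---for instance by tensoring pairs across distinct indices so that the effective sum $\sum (|A|_{I}|B_{I}|)^{2/3}$ reaches $|H|^{1-o(1)}$---is the core technical step enabling the $4/3$ exponent to emerge.
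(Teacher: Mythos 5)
Your proposal correctly reaches for the idea of converting the SDPP into an STPP and then reading off a $\Val(\Z_N)$ lower bound via \Cref{prop:stppval}, but the specific conversion you use is too weak, and the Hölder/saturation patch you then attempt is a dead end. The paper does not use Hölder at all in this proof.

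The missing ingredient is the SDPP-to-STPP construction from \cite[Section 6.2]{cohn2005group}, which works inside $G^3$, not $G \times \Z_n$. One picks a corner-free set $S \subset \Delta_n$ of size $n^{2-o(1)}$ (via Behrend) and, for each $v = (v_1,v_2,v_3) \in S$, forms the triple $A_v = A_{v_1}\times\{1\}\times B_{v_3}$, $B_v = B_{v_1}\times A_{v_2}\times\{1\}$, $C_v = \{1\}\times B_{v_2}\times A_{v_3}$. Each triple contributes $|A_v||B_v||C_v| = \prod_j |A_{v_j}||B_{v_j}| = n^{6-o(1)}$, and there are $n^{2-o(1)}$ of them, so $\Val(G^3) \ge n^{8-o(1)}$ while $|G^3| = n^{6-o(1)}$. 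This $8/6 = 4/3$ ratio is the source of the exponent; it is a genuine ``cubing'' amplification that your index-padded $S_i = A_i\times\{i\}$, $T_i = B_i\times\{0\}$, $U_i = \{0\}\times\{-i\}$ cannot produce (indeed, as you observed, your version only recovers $\Val \ge N^{1-o(1)}$; moreover it is not clear that your triples even satisfy STPP condition 2, since the $\Z_n$-coordinate constraint $i+j-2k\equiv 0$ does not force $i=j=k$). Your diagnosis that ``the derived STPP does not saturate the STPP inequality at $\omega=2$'' is pointing at the wrong bottleneck: saturation is never needed here, because the corner-free inflation already injects the correct polynomial gain directly into the triangle count.

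The remaining work is to transport this from $G^3$ (which is abelian but not cyclic) into a cyclic group without inflating the order by more than $n^{o(1)}$. The paper does this via a digit embedding $\Z_{m_1}\times\cdots\times\Z_{m_k}\hookrightarrow \Z_{\prod_i 3m_i}$ and, crucially, bounds $\prod_i 3m_i \le n^{6+\delta}$ by invoking \cite[Proposition 4.2]{blasiak2017cap} to control how many small cyclic factors $G^3$ can have (otherwise the factor $3^k$ could swamp $|G^3|$). Your plan to ``tensor the SDPP with itself $k$ times via \Cref{prop:tensorval}'' does not address this, since $\Z_{Mn}^k$ is not cyclic, so you would still need the same embedding step and the same count of small factors.
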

\begin{proof}
We begin by recalling how to turn an SDPP construction into an STPP construction \cite[Section 6.2]{cohn2005group}. Let $S \subset \Delta_n$ be corner-free and of size $n^{2-o(1)}$. For all $v = (v_1,v_2,v_3) \in S$, define the following subsets of $G^3$:
\begin{align*}
A_v &= A_{v_1} \times \{1\} \times B_{v_3},\\
B_v &= B_{b_1} \times A_{v_2} \times \{1\},\\
C_v &= \{1\} \times B_{v_2} \times A_{v_3}.
\end{align*}
It can be verified that the sets $(A_v, B_v, C_v)_{v \in S}$ satisfy the STPP. Hence \Cref{conj:twofamilies} yields an STPP with $n^{2-o(1)}$ triples of sets of size $n^{2-o(1)}$, inside a group of size $n^{6 - o(1)}$.

 Now consider the map from $G^3 = \Z_{m_1} \times \cdots \times \Z_{m_k}$, where $m_1 \le m_2 \le \cdots \le m_k$, to $G' := \Z_{\prod_i 3m_i}$ sending $(x_1, \ldots, x_k)$ to $x_1+(3m_1)x_2 + (3m_1)(3m_2)x_3 + \cdots $. First, the image of sets satisfying the STPP under this map still satisfy the STPP. This shows that $\Val(G') > n^{2-o(1)} \cdot n^{3(2-o(1))} = n^{8-o(1)}$. Second, for all fixed $c>0$ and $\ell \in \N$, $G^3$ cannot contain a subgroup of size $|G^3|^c$ generated by elements of order at most $\ell$ by \cite[Proposition 4.2]{blasiak2017cap}. Hence the number of $m_i$'s which are at most $\ell$ is at most $\log_2(|G^3|^c)$. The number of $m_i$'s which are greater than $\ell$ is trivially less than $\log_\ell |G^3|$. So,
 \[|G'| = \prod_{m_i \le \ell} 3m_i \prod_{m_i> \ell} 3m_i \le 3^{\log_2(|G^3|^c) + \log_\ell |G^3|} \cdot |G^3|.\]
 By taking $c$ sufficiently small and $\ell$ sufficiently large, this is at most $n^{6+\delta}$ for any desired $\delta> 0$. The claimed bound follows.
\end{proof}
Note that here there is no restriction on the family of abelian groups in consideration, unlike there was in the previous theorem.

\subsection{Relaxations of $\Val(\Z_n)$}
In this section we explore some strengthenings of \Cref{conj:val} which may be easier to understand. We start by discussing an over-strengthening of \Cref{conj:val} which \emph{cannot} give any barriers. We then discuss a few strengthenings for which our knowledge is embarrassingly bad, including the notions of skew-corner free sets from the introduction.

Considerations of the proof of the $n^{3/2}$ upper bound of \Cref{prop:trivialvalbds} reveal that it actually held for a (possibly) much weaker problem, where one only requires that the expected number of solutions of one of the three systems of two equations in \Cref{def:trap} is at most 1. We begin by noting that this upper bound is essentially best-possible for this weakened problem. In other words, one cannot hope to prove \Cref{conj:val} via an ``asymmetric" averaging argument.

\begin{proposition}\label{prop:avgbad}
There exist $A, B, C \subseteq \Z_n$ such that 
\[\E_{a' \in A, b' \in B}\left [\#\{(a,b,c): 0=a'+b+c=a+b'+c\}\right ] \le 1\]
and there are $n^{3/2 - o(1)}$ solutions to the equation $a+b+c=0$ with $a \in A, b \in B, c \in C$.
\end{proposition}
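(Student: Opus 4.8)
The plan is to exhibit $A,B,C\subseteq\Z_n$ for which the equation $a+b+c=0$ has $n^{3/2-o(1)}$ solutions while the "asymmetric" average over $a',b'$ of the number of pairs $(a,b)\in A\times B$ with $a'+b=a+b'$ (equivalently, the two-equation system $0=a'+b+c=a+b'+c$) stays bounded. The key observation is that the system $0=a'+b+c=a+b'+c$ forces $c=-a'-b=-a-b'$, i.e. $a-a'=b'-b$; so once we fix $a'\in A$ and $b'\in B$, the number of solutions $(a,b,c)$ is exactly the number of pairs $(a,b)\in A\times B$ with $a-a'=b'-b$, which is a coincidence count for the difference sets. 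Thus the asymmetric average in question is $\E_{a'\in A, b'\in B}\sum_{d}\mathbf 1[a'+d\in A]\,\mathbf 1[b'-d\in B]$; summing over $d$ and $a',b'$, this equals $\frac{1}{|A||B|}\sum_d r_{A-A}(d)\,r_{B-B}(-d)$ up to relabelling, which is $\frac{1}{|A||B|}$ times the additive energy-type quantity $\sum_d r_{A-A}(d) r_{B-B}(d)$.

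**Construction.** I would take $A$ to be a Sidon set (a $B_2$ set) in $\Z_n$, so that $r_{A-A}(d)\le 1$ for all $d\neq 0$ and $|A|=n^{1/2-o(1)}$. Then the energy-type sum $\sum_{d\neq 0} r_{A-A}(d) r_{B-B}(d)\le \sum_d r_{B-B}(d)=|B|^2$, and the $d=0$ term contributes $|A||B|$, so the asymmetric average is at most $\frac{1}{|A||B|}(|A||B|+|B|^2)=1+\frac{|B|}{|A|}$. To make this $\le 1$ — rather than $O(1)$ — one should be slightly more careful: restrict to $A$ a Sidon set with $r_{A-A}(d)=0$ for $d\neq 0$ except... actually $r_{A-A}(d)\in\{0,1\}$ for $d\neq0$ is the best possible, and one simply absorbs the small additive loss, either by taking a constant-factor smaller $A$ and a short additional argument, or by noting the statement as written permits rescaling $A,B,C$ so the bound is exactly $\le 1$ after mild shrinking (the problem only asks for $\le 1$, and one can delete a few elements of $B$). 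Meanwhile, to get $n^{3/2-o(1)}$ solutions to $a+b+c=0$ one wants $A$ small but $|A|\cdot(\text{number of }(b,c)\in B\times C\text{ with }b+c=-a)$ large; the natural choice is $C=-A-B$ restricted appropriately, or better: take $B$ and $C$ large (close to all of $\Z_n$, or structured so that $b+c=-a$ has $\sim n$ solutions for each $a\in A$), giving $|A|\cdot n = n^{3/2-o(1)}$ solutions. One must then re-examine the asymmetric average with this large $B$: since $B$ is essentially an interval or all of $\Z_n$, $r_{B-B}(d)\approx n$ for all $d$, and then $\sum_{d\neq0} r_{A-A}(d) r_{B-B}(d)\approx n\cdot(|A|^2-|A|)\approx n\cdot n = n^2$, while $\frac{1}{|A||B|}\approx \frac{1}{n^{1/2}\cdot n}$, giving average $\approx n^{1/2}$ — far too big. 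So $B$ and $C$ both large does \emph{not} work, and the right balance is subtler: one should take $|A|\approx|B|\approx n^{1/2}$ all Sidon (or $B$ Sidon), giving only $\sqrt n \cdot \sqrt n=n$ solutions if $C$ is forced — not enough either.

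**The actual balance.** The resolution: keep $A$ Sidon of size $n^{1/2-o(1)}$ but let $B=C=\Z_n$ (or a structured set), and \emph{switch which pair of equations we average over} — no, the statement fixes the pair $0=a'+b+c=a+b'+c$, which treats $a$ and $b$ symmetrically (it is the third of the three systems in \Cref{def:trap} that is asymmetric in $C$). The trick must be: $C$ is the "free" coordinate here, so we want $|C|$ large and $A,B$ such that $\sum_d r_{A-A}(d)r_{B-B}(d)$ is small relative to $|A||B|$ — which as computed needs $B$ Sidon too. With $A,B$ both Sidon of size $\sqrt n$ and $C=\Z_n$, the number of solutions to $a+b+c=0$ is exactly $|A||B|=n^{1-o(1)}$: still not $n^{3/2-o(1)}$. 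Hence one genuinely needs $C$ to interact. The correct construction, I expect, takes $A,B,C$ all of size $\approx n^{1/2}$ but arranged so $a+b+c=0$ has $\approx n^{1/2}$ solutions \emph{per element}, i.e. uses a product structure $\Z_n\cong \Z_m\times\Z_m$ with $n=m^2$: let $A=A_1\times A_2$, $B=B_1\times B_2$, $C=C_1\times C_2$ where in the first coordinate $(A_1,B_1,C_1)$ is a "corner-like" or matrix-multiplication-type extremal configuration with $m^{3/2}$ solutions and controlled difference structure, and in the second coordinate things are Sidon to kill the asymmetric energy. Concretely I would mimic the matrix-multiplication hypergraph $M_{m^{1/2}}$ inside $\Z_m$-factors to get $n^{3/2}=m^3$ solutions, then tensor with a Sidon component to enforce $r_{A-A}(d)r_{B-B}(d)$ summing small. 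The main obstacle — and where I expect the real work to lie — is making these two desiderata (many solutions to $a+b+c=0$, bounded asymmetric coincidence average) coexist quantitatively with the $n^{3/2-o(1)}$ and $\le 1$ both exactly met; I anticipate the clean construction is: take $A$ a Sidon set and $B,C$ "dual" sets built so that $(A,B,C)$ is $1/2$ of a matrix-multiplication configuration, verify the difference-set computation $\sum_d r_{A-A}(d)r_{B-B}(d)=O(|A||B|)$ using the Sidon property of $A$ alone (since one factor being Sidon caps the sum at $|A||B|+|B|^2$ and we arrange $|B|\le|A|$), and then a short normalization deletes $o(|B|)$ elements to push the average below $1$.
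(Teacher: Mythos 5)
There is a genuine gap, in fact two. First, your reformulation of the average drops the constraint that $c$ must lie in $C$: in the quantity $\#\{(a,b,c): 0=a'+b+c=a+b'+c\}$ the triple ranges over $A\times B\times C$ (as in \Cref{def:trap}), so the correct reformulation is $\frac{1}{|A||B|}\sum_{c\in C} r(A,B,-c)^2\le 1$, where $r(A,B,z)$ counts representations $z=a+b$ --- not the unrestricted energy $\frac{1}{|A||B|}\sum_d r_{A-A}(d)r_{B-B}(d)$. This is not cosmetic: the unrestricted sum always contains the $d=0$ term $|A||B|$, so demanding it be at most $|A||B|$ forces $(A-A)\cap(B-B)=\{0\}$, which makes the map $(a,b)\mapsto a+b$ injective on $A\times B$, hence $|A||B|\le n$ and at most $n$ solutions to $a+b+c=0$ --- your version of the target is therefore unattainable, and the freedom to restrict the second-moment sum to a \emph{small, carefully placed} $C$ is exactly the leverage the statement gives you. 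Second, you never actually produce a construction: after correctly ruling out (Sidon $A$, large $B$) and (Sidon $A,B$, $C=\Z_n$), the proposal ends in speculation about tensoring matrix-multiplication-type configurations with Sidon sets, with no sets written down and no verification of either inequality.

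For comparison, the paper's construction is quite different from anything you sketch and hinges on two ideas your approach misses. Working in $\Z_N$ with $N=\Theta(n^2)$, take a Behrend 3AP-free set $S\subseteq[n]$ of size $n^{1-o(1)}$ and set $A=B=[3n^2,4n^2]\cup\bigcup_{x\in S}[xn,xn+n/2]$ and $C=-\{2xn+y: x\in S,\ y\in[n]\}$. The 3AP-freeness forces every representation $-c=a+b$ with $c\in C$ to be ``diagonal'' ($x_1=x_2=x$), so $r(A,B,-c)=\Theta(n)$ for $\Theta(|S|n)$ values of $c$, giving $\sum_{c\in C}r(A,B,-c)=n^{3-o(1)}=N^{3/2-o(1)}$ solutions while $\sum_{c\in C}r(A,B,-c)^2=n^{4-o(1)}$; meanwhile the padding block $[3n^2,4n^2]$ contributes no representations of $-C$ but inflates $|A||B|$ to $\Theta(n^4)$, which is what pushes the restricted second moment below $|A||B|$. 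Note that this padded $A=B$ has huge additive energy, so it badly violates your unrestricted criterion --- further evidence that keeping $c\in C$ in the average is the essential point. To repair your write-up you would need to restate the condition as $\sum_{c\in C}r(A,B,-c)^2\le|A||B|$ and then supply a concrete construction of this flavor (Behrend structure plus a large set padding $A,B$ that avoids $-C+(-B)$, or an equivalent device), with both estimates checked.
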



\begin{proof}
	Let $r(A,B,c)$ denote the number of representations of $c$ as $a+b$. First note that the proposition is equivalent to the statement that  $\sum_{c \in C} r(A,B,-c)^2 \le |A| |B|$ and $\sum_{c \in C} r(A,B,-c) = n^{3/2 - o(1)}$.
	
	Let $S \subset [n]$ be 3AP-free and of size $n^{1-o(1)}$. Consider the sets
	\[A = B= [3n^2,4n^2] \cup \bigcup_{x \in S} [x n, xn + n/2],C= -\{2xn+y : x \in S, y \in [n] \}\]
	regarded as subsets of $\Z_{100n^2}$.
	By definition, for any $x \in S$ and $y \in [n]$, $-(2xn+y) =c \in C$. If we have any representation $-c = a + b$, then $a, b < 3n^2$. So we have $a = x_1n+y_1, b = x_2n+y_1$ with $x_1, x_2 \in S$ and $1 \le y_1, y_2 \le n$. So $(x_1+x_2)n + (y_1+y_2) = 2xn + y$, and then we are forced to have $x_1 + x_2 = 2x$ and $y_1+y_2 = y$. But because $S$ is 3AP-free, we must have $x_1 = x_2 = x$. Hence $r(A,B,-c)$ is exactly the number of solutions to $y = y_1 + y_2$ with $y_1, y_2 \in [n]$, which is $\Omega(n)$ for $\Omega(n)$ choices of $y \in [n]$. Hence $\sum_{c \in C} r(A,B,-c) = \Theta(|S| n^2) = n^{3-o(1)}$. Also, we have that $\sum_{c \in C} r(A,B,-c)^2 =n^{4-o(1)} < |A| |B| = \Theta(n^4)$, and we are done.
\end{proof}
Can one find a construction achieving $n^{3/2 - o(1)}$ for the averaging version of \Cref{def:trap} that involves all three systems of equations? That is:
\begin{question}
What is the maximum over all $A, B, C \subseteq \Z_n$ satisfying 
\begin{align*}
\E_{a' \in A, b' \in B}\left [\#\{(a,b,c): 0=a'+b+c=a+b'+c\}\right ] &\le 1,\\
\E_{a' \in A, c' \in C}\left [\#\{(a,b,c): 0=a'+b+c=a+b+c'\}\right ] &\le 1,\\
\E_{b' \in B, c' \in C}\left [\#\{(a,b,c): 0=a+b'+c=a+b+c'\}\right ] &\le 1,
\end{align*}
of the number of solutions to $a+b+c=0$?
\end{question}

There are a number of relaxations of the quantity $\Val(n)$ for which we know basically nothing. A first relaxation that still seems very stringent is that of a \emph{triforce-free} triple, defined as follows.
\begin{definition}\label{def:trifree}
	Let $A, B, C \subseteq \{0, \ldots, n\}$. We say that $(A,B,C)$ is triforce-free if there is no solution to
	\[a+b+c'=a+b'+c=a'+b+c=n\]
	with $a \neq a', b \neq b', c \neq c'$.
	We write $\Val(\triforce{0.4cm},n)$ for the maximum over all such $A,B,C$ of the number of solutions to $a+b+c=n$.
\end{definition}

This condition just says that $\{(a,b,c) \in A \times B \times C: a+b+c=n\} \subseteq \Delta_{n+1}$ is corner-free. Equivalently, $(A,B,C)$ is triforce-free if the hypergraph with parts $A,B,C$ and triangles between any triples summing to $n$ does not contain the triforce hypergraph (the second hypergraph in \Cref{fig:forbid}). As every equilateral trapezoid-free triple of sets also has this property, we have the following.

\begin{proposition}
$\Val(\triforce{0.4cm},n) \ge \Val(n)$.
\end{proposition}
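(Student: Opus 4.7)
The plan is to argue that every equilateral trapezoid-free triple $(A,B,C)$ is automatically triforce-free, after which $\Val(\triforce{0.4cm},n) \ge \Val(n)$ follows immediately, since $\Val(n)$ is a supremum over a smaller class of competitors than $\Val(\triforce{0.4cm},n)$. This containment of classes is already implicit in the discussion surrounding \Cref{prop:extremalprob}, where the triforce is one of the two forbidden sub-hypergraphs whose exclusion is encoded by the three systems of \Cref{def:trap}, but I would give the direct verification from \Cref{def:trap} and \Cref{def:trifree} for concreteness.

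I would proceed by contradiction. Suppose $(A,B,C)$ is equilateral trapezoid-free yet contains a triforce, so there exist $a\neq a'\in A$, $b\neq b'\in B$, $c\neq c'\in C$ satisfying
\[a+b+c' = a+b'+c = a'+b+c = n.\]
Now apply the first system of \Cref{def:trap} with fixed parameters $(a',b') := (a,b) \in A \times B$ (the value of the fixed $c'$ is irrelevant to this system). The plan is to exhibit two distinct solutions $(\tilde a, \tilde b, \tilde c) \in A \times B \times C$ to the pair of equations $a+\tilde b+\tilde c=n$ and $\tilde a+b+\tilde c=n$. The first solution is $(\tilde a,\tilde b,\tilde c)=(a,b,c')$, for which both equations collapse to the triforce relation $a+b+c'=n$. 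The second is $(\tilde a,\tilde b,\tilde c)=(a',b',c)$, for which the two equations become $a+b'+c=n$ and $a'+b+c=n$, both of which hold by the remaining triforce relations. These solutions are distinct because $a\neq a'$, contradicting equilateral trapezoid-freeness. The inequality then follows by taking $(A,B,C)$ to be an optimizer for $\Val(n)$ and observing that it is a valid competitor in the supremum defining $\Val(\triforce{0.4cm},n)$.

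There is no real obstacle: the entire content is the observation that a triforce configuration secretly packages two distinct solutions into one trapezoid system. The only mild care needed is bookkeeping --- tracking which variables are fixed and which are free in \Cref{def:trap}, and verifying that the two triples I produce do indeed lie in $A\times B\times C$ (which they do, since each coordinate is drawn from the appropriate set among $\{a,a'\}\subseteq A$, $\{b,b'\}\subseteq B$, $\{c,c'\}\subseteq C$).
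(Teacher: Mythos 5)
Your proposal is correct and takes essentially the same approach as the paper: both argue that every equilateral trapezoid-free triple is automatically triforce-free, so $\Val(n)$ is a supremum over a smaller class of competitors. The paper states this containment implicitly via the hypergraph characterization (the triforce is one of the two forbidden configurations in \Cref{fig:forbid} encoded by \Cref{def:trap}), while you unpack it into a direct algebraic verification; your verification is correct --- the triforce relations $\alpha+\beta+\gamma'=\alpha+\beta'+\gamma=\alpha'+\beta+\gamma=n$ do supply the two distinct solutions $(\alpha,\beta,\gamma')$ and $(\alpha',\beta',\gamma)$ to the first trapezoid system with fixed parameters $(\alpha,\beta)$.
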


Here is an even weaker notion than that of being triforce-free. We thank Ryan O'Donnell for suggesting this definition.

\begin{definition}\label{def:skewcorners}
	We call $S \subseteq \Delta_n$ skew-corner free if for $(a,b,c), (a,b',c') \in S$, it holds that $(a+b - b', b'', c'') \notin S$ for all $b'', c''$, and this remains true after any permutation of the coordinates of $S$. 
\end{definition}
Pictorially, this says that for any two points lying on an axis-aligned line in $\Delta_n$, the parallel line passing through a third point that would form a corner with these two points must contain no points. As \Cref{def:trifree} yields corner-free subsets of $\Delta_n$ obtained by deleting axis-aligned lines, it follows that this is a relaxation of being triforce-free. More formally, we have the following.

\begin{proposition}\label{prop:tridel}
The largest skew-corner free subset of $\Delta_{n+1}$ is at least $\Val(\triforce{0.4cm},n)$.
\end{proposition}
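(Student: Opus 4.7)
My approach is to take a triforce-free triple $(A, B, C) \subseteq \{0, \ldots, n\}^3$ witnessing $\Val(\triforce{0.4cm}, n)$ and to set
\[S := \{(a, b, c) \in A \times B \times C : a + b + c = n\} \subseteq \Delta_{n+1}.\]
By construction $|S| = \Val(\triforce{0.4cm}, n)$, so the task reduces entirely to verifying that $S$ is skew corner-free in the sense of \Cref{def:skewcorners}.

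Both the triforce-free property of $(A, B, C)$ and the equation $a + b + c = n$ are invariant under permutations of the three sets, so it suffices to treat a single orientation of the skew-corner condition. I would take two distinct points $(a, b, c), (a, b', c') \in S$ sharing the first coordinate (so $b \neq b'$, and hence $c' - c = b - b' \neq 0$) and suppose for contradiction that some point of $S$ has first coordinate $a'' := a + b - b'$; this forces $a'' \in A$. The key observation is then that the three triples
\[ (a, b, c), \quad (a, b', c'), \quad (a'', b', c) \]
all lie in $A \times B \times C$ and sum to $n$, the only nontrivial verification being $a'' + b' + c = (a + b - b') + b' + c = a + b + c = n$. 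Regarding $(a, b', c)$ as a ``virtual center'' (which itself need not lie in $A \times B \times C$), these three triples instantiate exactly the triforce configuration of \Cref{def:trifree}: they perturb the second, third, and first coordinate of the center, respectively. The three required distinctness conditions $b \neq b'$, $c \neq c'$, $a \neq a''$ all follow from $b \neq b'$, contradicting the triforce-freeness of $(A, B, C)$.

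The only real step is recognizing that the correct center of the triforce is $(a, b', c)$ rather than, say, either of the two points originally in $S$; once this is pinned down the remaining verification is a one-line calculation, and I anticipate no substantive obstacle.
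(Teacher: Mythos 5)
Your argument is correct and is essentially the paper's proof: both start from the same set $S$, assume a skew-corner with third abscissa $a+b-b'$ (the paper uses $a-b+b'$, which is the same after swapping the roles of $(a,b,c)$ and $(a,b',c')$), and manufacture a third solution to $x+y+z=n$ with coordinates in $A\times B\times C$ that, together with the two original points, forms a forbidden triforce. The only cosmetic difference is your choice of virtual center $(a,b',c)$ and third triple $(a'',b',c)$ versus the paper's $(a,b,c')$ and $(a-b+b',b,c')$.
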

\begin{proof}
Suppose that $A,B,C \subset \{0, \ldots, n\}$ satisfy the conditions of \Cref{def:trifree}, and let $S = \{(a,b,c) \subset A \times B \times C: a+b+c=n\} \subseteq \Delta_{n+1}$. Suppose for contradiction that $(a,b,c), (a,b',c') \in S$ and $(a-b+b',b'',c'') \in S$. Since $a-b+b' \in A, b \in B, c' \in C$ and $(a-b+b')+b+c' = a+b'+c' = n$, it follows that $(a-b+b', b,c') \in S$. But this is impossible: the three solutions $a+b'+c' = n, a+b+c=n,(a-b+b') +  b +c'$ violate \Cref{def:trifree}. One reasons similarly about other permutations of coordinates.
\end{proof}

The best lower bound that we know on the size of the largest skew-corner free subset of $\Delta_n$ is $\Omega(n)$; $n$ is obtained trivially by taking one line on the side of $\Delta_n$, and it is not hard to improve this to $3n/2$. We have found examples exceeding these bounds with computer search (see \Cref{fig:symm}).

If we weaken \Cref{def:skewcorners} by dropping the requirement that the condition holds for all permutations of coordinates, we are led to the following notion.
\begin{definition}\label{def:biskew}
We say $S \subset [n]^2$ is skew corner-free if it contains no configuration $(x,y), (x,y+d), (x+d, y')$ with $d \neq 0$.
\end{definition}
\begin{proposition}\label{prop:skewdel}
The largest skew corner-free subset of $[n]^2$ is at least as big as the largest skew corner-free subset of $\Delta_n$.
\end{proposition}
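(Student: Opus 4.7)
The plan is to exhibit a size-preserving injection from $\Delta_n$ to $[n]^2$ under which skew corner-free subsets of $\Delta_n$ (in the sense of Definition \ref{def:skewcorners}) are sent to skew corner-free subsets of $[n]^2$ (in the sense of Definition \ref{def:biskew}). The natural candidate is the projection
\[
\pi : \Delta_n \to [n]^2, \qquad (a,b,c) \mapsto (a,b).
\]
Since $a+b+c = n-1$ for every point of $\Delta_n$, the third coordinate is determined by $(a,b)$, so $\pi$ is injective, and its image is contained in $\{0, \ldots, n-1\}^2$, which we identify with $[n]^2$ (up to translation).

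The main step is to verify that if $S \subseteq \Delta_n$ satisfies Definition \ref{def:skewcorners}, then $\pi(S) \subseteq [n]^2$ satisfies Definition \ref{def:biskew}. Suppose for contradiction that $\pi(S)$ contains a forbidden configuration $(x,y),(x,y+d),(x+d,y')$ with $d \neq 0$. Lifting these points along $\pi$ gives triples $(x,y,c_1),(x,y+d,c_2),(x+d,y',c_3) \in S$. The first two share their first coordinate and differ in the second by $d$, so applying Definition \ref{def:skewcorners} to the ordered pair $((x,y+d,c_2),(x,y,c_1))$ with $(a,b,c) = (x,y+d,c_2)$ and $(a,b',c') = (x,y,c_1)$ yields that $(a+b-b', b'', c'') = (x+d, b'',c'') \notin S$ for all $b'',c''$. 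But $(x+d,y',c_3) \in S$, a contradiction. The conclusion is then immediate: $|\pi(S)| = |S|$, and $\pi(S)$ is a skew corner-free subset of $[n]^2$, establishing the stated size comparison.

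I expect no serious obstacle; this is essentially an unpacking of definitions. The underlying reason the argument is painless is that Definition \ref{def:skewcorners} imposes the forbidden skew-corner pattern along all three axis-aligned directions of the triangular grid, whereas Definition \ref{def:biskew} only requires it along one pair of directions of $[n]^2$. Consequently, projecting away the third coordinate of $\Delta_n$ loses information about one of the three triangular-grid directions, but the remaining two are precisely what Definition \ref{def:biskew} asks about --- and in fact only a single instance of the symmetric condition of Definition \ref{def:skewcorners} is needed to derive the contradiction above.
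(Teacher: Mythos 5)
Your proposal is correct and matches the paper's proof essentially exactly: both project $\Delta_n$ onto the first two coordinates (shifting into $[n]^2$) and observe that the resulting set inherits the skew corner-free property of Definition~\ref{def:biskew} from a single instance (no permutation needed) of the condition in Definition~\ref{def:skewcorners}. You have simply written out the contradiction argument and the lifting of the forbidden configuration more explicitly than the paper does.
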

\begin{proof}
Given a skew corner-free set $S \subseteq \Delta_n$, let $S'$ be its projection onto the first two coordinates. This is a subset of $\{0,\ldots, n-1\}^2$ of size $|S|$. By definition, it contains no points $(a,b), (a,b'), (a+b-b',b'')$. By shifting each point by $(1,1)$ we obtain a subset of $[n]^2$ with this property.
\end{proof}

As a consequence, we have \Cref{thm:main1,thm:main2}.
\begin{proof}[Proof of \Cref{thm:main1} and \Cref{thm:main2}]
By \Cref{thm:polyval}, if $\omega = 2$ via STPP constructions in $\Z_q^\ell$, then $\Val(\Z_n) \ge \Omega(n^{1+c})$. By \Cref{prop:resasonable}, $\Val(\Z_n) = \Theta(\Val(n))$,  and by \Cref{prop:skewdel,prop:tridel}, $\Val(n)$ is at most the size of the largest skew corner-free subset of $[n]^2$. This proves \Cref{thm:main1}. One similarly concludes \Cref{thm:main2} by using \Cref{thm:twofam43}.
\end{proof}

We have the following nontrivial lower bound for this relaxed problem, due to a MathOverflow answer of Fedor Petrov \cite{petrovanswer}.  

\begin{proposition}\label{prop:nontrivgrid}
There is a skew corner-free subset of $[n]^2$ of size $\Omega(n \log n/\sqrt{\log \log n})$.
\end{proposition}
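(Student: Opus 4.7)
The construction I propose follows Petrov \cite{petrovanswer}, combining the trivial one-line bound with a Behrend-type 3AP-free set. The plan has three main ideas.

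The starting observation is that the trivial lower bound of $n$ (a single vertical line) can already be improved to $2n-2$: take $S = ([n]\setminus\{1,n\})\times\{1\} \cup [n]\times\{n\}$. Every populated column contains two points whose difference is $n-1$, and the columns at distance $n-1$ that would be forced empty lie outside $[n]$, except at the boundary columns $\{1,n\}$ which one simply omits. This ``boundary trick'' (using a vertical difference so large that $x+d$ leaves $[n]$) is what makes any extra point essentially free to add.

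To push this to $n\log n/\sqrt{\log\log n}$, I would let each column $x$ carry a set $J(x)\subseteq[n]$ of heights drawn from a common template. The template is obtained from a Behrend 3AP-free set $B\subseteq\{1,\ldots,m\}$ with $|B|\ge m\cdot\exp(-c\sqrt{\log m})$ and an $x$-dependent shift, placed near the top of $[n]$ so that the boundary trick applies to as many pairs $(y,y')\in J(x)\times J(x)$ as possible. The shifts are calibrated so that any would-be skew corner $(x,y),(x,y+d),(x+d,y')$ either has $d$ large enough to push $x+d$ out of $[n]$, or forces three elements of $B$ into an arithmetic progression $b_1+b_3=2b_2$, contradicting 3AP-freeness.

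Setting $m=\Theta(\log n)$ balances the length of each $J(x)$ against the fraction of columns for which the boundary trick is available; summing the contributions over columns and optimizing the Behrend-style sphere gives $\Omega(\log n/\sqrt{\log\log n})$ points per eligible column and thus $|S|=\Omega(n\log n/\sqrt{\log\log n})$. The hard part is the simultaneous bookkeeping: each column's template must be long, the templates across columns must be coherent (so no cross-column skew corner arises), and every forced-empty column must fall either outside $[n]$ or outside the support. I expect this coordination between the 3AP-free structure and the column-range constraints to be the main technical obstacle, and the essence of Petrov's explicit choice of shifts.
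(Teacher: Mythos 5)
Your proposal does not match Petrov's actual construction, and it has two gaps I do not see how to close. The paper's construction uses \emph{primitive sets}, not Behrend 3AP-free sets: take a primitive $A \subseteq [n]$ (meaning $a \nmid a'$ for distinct $a, a' \in A$) and set $S = \{(a, ka) : a \in A,\ 1 \le k \le n/a\}$. Two points in column $a$ at heights $k_1 a$ and $k_2 a$ have vertical gap $d = (k_2 - k_1)a$, so the forced-empty column index is $a + d = (1 + k_2 - k_1)a$, a multiple of $a$; by primitivity this is either outside $[n]$ or not in $A$, hence the whole column is empty. The size is about $n \sum_{a \in A} 1/a$, and the $\Omega(\log n / \sqrt{\log\log n})$ factor comes from Erd\H{o}s's lower bound on $\max_A \sum_{a \in A} 1/a$ over primitive $A$ --- the same $\sqrt{\log\log n}$ that appears in Behrend's \emph{1935} theorem on primitive reciprocal sums, not in his \emph{1946} 3AP construction.

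Concretely, two things go wrong in your sketch. First, the arithmetic does not give the claimed bound: with $m = \Theta(\log n)$ and a 3AP-free $B \subseteq [m]$ of size $m\exp(-c\sqrt{\log m})$, you get $\Theta\bigl(\log n \cdot \exp(-c\sqrt{\log\log n})\bigr)$ heights per column, which is asymptotically \emph{smaller} than the target $\log n / \sqrt{\log\log n}$; you appear to be conflating the two different Behrend theorems. Second, and more fundamentally, 3AP-freeness is the wrong primitive for this problem: skew corner-freeness requires that whenever column $x$ contains heights $y$ and $y+d$, the \emph{entire} column $x+d$ is empty, not merely that a particular height in it is avoided. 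A per-column Behrend template has no mechanism forcing whole columns to vanish, and the ``boundary trick'' only applies to the minority of pairs with $d$ comparable to $n$; most pairs have $d \le m = \Theta(\log n)$ and are untouched by it. In Petrov's argument it is precisely primitivity that supplies the needed structural reason for column emptiness (the forced column index is a proper multiple of $a$, hence not in $A$). Without an analogue of that, the ``simultaneous bookkeeping'' you flag as the hard part does not close.
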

\begin{proof}
$A \subseteq [n]$ is called \emph{primitive} if for all $a \neq a' \in A, a \nmid a'$. It is easily seen that if $A$ is primitive then the set of points $(a,ka) \subseteq [n]^2$ for all $k\le n/a$ avoids the forbidden configurations. This gives a subset of size $n \sum_{a \in A} 1/a$. At the same time, there exists a $c>0$ and a primitive set $A$ where $\sum_{a \in A} 1/a > c \log n/(\log \log n)^{1/2}$ \cite{erdos1967theorem}. We note that this is best-possible, matching (up to the constant) an upper bound on $\sum_{a \in A} 1/a$ for primitive $A$ due to Behrend \cite{behrend1935sequences}.
\end{proof}

This construction breaks when we strengthen the definition of skew corner-freeness in $[n]^2$ to forbid skew corners with two points parallel to the $x$ axis. This corresponds to the following notion.

\begin{definition}\label{quest:twodir}
We say $S \subset [n]^2$ is bi-skew corner-free if it contains no configurations $(x,y), (x,y+d), (x+d, y')$ or $(x,y), (x+d, y), (x',y+d)$, with $d \neq 0$.
\end{definition}

As far as we know, it is possible that the largest bi-skew corner-free subset has size $O(n)$.
\section{Acknowledgments}
I thank Ryan O'Donnell for many useful discussions about these problems, and for suggesting \Cref{def:skewcorners}. I also thank Chris Umans for making comments which motivated this paper early on, and in particular, which motivated \Cref{thm:twofam43}.
        \bibliographystyle{amsalpha}
\bibliography{refs}
\end{document}